\tikzset{arrow/.style={-stealth}}
\tikzset{arrowshorter/.style={-stealth, shorten <=2pt, shorten >=2pt}}
\tikzset{arrowmuchshorter/.style={-stealth, shorten <=7pt, shorten >=6pt}}
\tikzset{mono/.style={>-stealth}} 
\tikzset{epi/.style={-twotriang}} 
\tikzset{twoarrowlonger/.style={double,double distance=1.5pt,
shorten <=5pt,shorten >=6pt,
decoration={markings,mark=at position -4pt with {\arrow[scale=1.75]{>}}},
preaction={decorate}}} 
\tikzset{twoarrow/.style={double,double distance=1.5pt,
shorten <=6pt,shorten >=7pt, 
decoration={markings,mark=at position -4pt
with {\arrow[scale=1.75]{>}}},
preaction={decorate} 
}
}
\tikzset{%
    symbol/.style={%
        draw=none,
        every to/.append style={%
            edge node={node [sloped, allow upside down, auto=false]{$#1$}}}
    }
}
\tikzset{mapstikz/.style={-stealth, 
decoration={markings,mark=at position 0pt with {\arrow[scale=0.5]{|}}}, preaction={decorate}}}
\theoremstyle{plain}   
\newtheorem{thm}{Theorem}[section] 
\let\c@thm\c@thm\makeatother
\newtheorem{cor}{Corollary}[section]
\let\c@cor\c@thm\makeatother
\newtheorem{lem}{Lemma}[section]
\let\c@lem\c@thm\makeatother
\newtheorem{prop}{Proposition}[section]
\let\c@prop\c@thm\makeatother
\let\c@claim\c@thm\makeatother
\let\c@conjecture\c@thm\makeatother
\newtheorem*{unnumberedproposition}{Proposition}
\newtheorem{thmalph}{Theorem}
\theoremstyle{definition}
\newtheorem{defn}{Definition}[section]
\let\c@defn\c@thm\makeatother
\let\c@const\c@thm\makeatother
\let\c@notn\c@thm\makeatother
\newtheorem{convention}{Convention}[section]
\let\c@convention\c@thm\makeatother
\theoremstyle{remark}
\newtheorem{rmk}{Remark}[section]
\let\c@rmk\c@thm\makeatother
\newtheorem{ex}{Example}[section]
\let\c@ex\c@thm\makeatother
\let\c@observation\c@thm\makeatother
\let\c@warning\c@thm\makeatother
\let\c@digression\c@thm\makeatother
\let\c@answ\c@thm\makeatother
\let\c@answ\c@thm\makeatother
\newtheorem{aside}{Aside}[section]
\let\c@aside\c@thm\makeatother
\let\c@equation\c@thm
\numberwithin{equation}{section}
\newcommand{\newrefformat}[2]{}
\crefname{lem}{Lemma}{Lemmas}
\crefname{thm}{Theorem}{Theorems}
\crefname{defn}{Definition}{Definitions}
\crefname{notn}{Notation}{Notations}
\crefname{const}{Construction}{Constructions}
\crefname{prop}{Proposition}{Propositions}
\crefname{rmk}{Remark}{Remarks}
\crefname{cor}{Corollary}{Corollaries}
\crefname{equation}{Display}{Displays}
\crefname{ex}{Example}{Examples}
\crefname{thmalph}{Theorem}{Theorems}
\crefname{answ}{Answer}{Answers}
\crefname{crzcond}{Crazy Condition}{Crazy Conditions}
\newcommand{\cC}{\mathcal{C}}
\newcommand{\cD}{\mathcal{D}}
\newcommand{\cP}{\mathcal{P}}
\newcommand{\cS}{\mathcal{S}}
\newcommand{\cT}{\mathcal{T}}
\newcommand{\cat}{\cC\!\mathit{at}}
\newcommand{\set}{\cS\!\mathit{et}}
\DeclareMathOperator{\Map}{Map}
\newcommand{\inttrunc}[1]{\tau_{\leq n}^{\text{i}}}
   \newcommand{\matn}[2]{\operatorname{Mat}_{#1}{#2}}
      \newcommand{\mat}[1]{\operatorname{Mat}{#1}}
\DeclareMathOperator{\id}{id}
\DeclareMathOperator{\Ob}{Ob}
\DeclareMathOperator{\op}{op}
\DeclareMathOperator{\sk}{sk}
\tikzset{%
scalearrow/.style n args={3}{
  decoration={
    markings,
    mark=at position (1-#1)/2*\pgfdecoratedpathlength
      with {\coordinate (#2);},
    mark=at position (1+#1)/2*\pgfdecoratedpathlength
      with {\coordinate (#3);},
    },
  postaction=decorate,
  }
}
\newcommand\simpfverArrow[9]{%
    \def\tempa{#1}%
    \def\tempb{#2}%
    \def\tempc{#3}%
    \def\tempd{#4}%
    \def\tempe{#5}%
    \def\tempf{#6}%
    \def\tempg{#7}%
    \def\temph{#8}%
    \def\tempi{#9}%
}
\newcommand\simpfverArrowcontinued[5]{%
    \begin{tikzcd}[column sep=1.3cm, row sep=0.4cm, baseline=(current  bounding  box.center), ampersand replacement=\&]
 #5 \& #4 \arrow[l, "\tempc" swap] \&[-7mm] \&[-7mm] #5 \& #4 \arrow[l, "\tempc" swap]\\
 {} \& \&[-7mm] {=} \&[-7mm] \& \\
 #2 \arrow[r, "\tempa" swap]\arrow[uu, "\tempd", ""{name=a031,inner sep=2pt, swap} ] \arrow[uur, "\tempe"{description}, ""{name=a02,inner sep=2pt, swap}]\& #3 \arrow[uu, "\tempb" swap]\&[-7mm] \&[-7mm] #2 \arrow[uu, "\tempd", ""{name=a032,inner sep=2pt, swap}]\arrow[r, "\tempa" swap]\& #3\arrow[uu, "\tempb" swap] \arrow[uul, "\tempf"{description}, ""{name=a13,inner sep=2pt, swap}]
 %
 %
 \arrow[phantom, from=a031, to=1-2, scalearrow={0.63*0.8}{start023}{end023}]
 \arrow[Rightarrow, to path={(start023)--(end023)\tikztonodes}, "\tempg" ]
  \arrow[phantom, from=a02, to=3-2, scalearrow={0.8}{start012}{end012}]
 \arrow[Rightarrow, to path={(start012)--(end012)\tikztonodes}, "\temph" swap ]
  \arrow[phantom, from=a032, to=3-5, scalearrow={0.63*0.8}{start013}{end013}]
 \arrow[Rightarrow, to path={(start013)--(end013)\tikztonodes}, "\tempi" swap ]
   \arrow[phantom, from=a13, to=1-5, scalearrow={0.8}{start123}{end123}]
 \arrow[Rightarrow, to path={(start123)--(end123)\tikztonodes}, "#1" ] 
 \end{tikzcd}
}
\tikzset{mono/.style={>-stealth}} 
\tikzset{epi/.style={-twotriang}}
\author{Viktoriya Ozornova}
\address{Fakult\"at f\"ur Mathematik, Ruhr-Universit\"at Bochum, Bochum, Germany}
\email{viktoriya.ozornova@rub.de}
\author{Martina Rovelli}
\address{Mathematical Sciences Institute,
The Australian National University,
Canberra, Australia
}
\email{martina.rovelli@anu.edu.au}
\keywords{Duskin nerve, 2-category, suspension $2$-category, Joyal's cell category}
\subjclass[2010]{55U35; 18G30; 18D05; 55U10}
\begin{document}

\title{The Duskin nerve of $2$-categories in Joyal's cell category $\Theta_2$}

\maketitle

\begin{abstract}
We give an explicit and purely combinatorial description of the Duskin nerve of any $(r+1)$-point suspension $2$-category, and in particular of any $2$-category belonging to Joyal's cell category $\Theta_2$.
\end{abstract}

\tableofcontents

\section*{Overview of results}

A $2$-categorical analog of the nerve of ordinary categories goes by the name of \emph{Duskin nerve}. It first occurred as an instance of Street's nerve of $\omega$-categories from \cite{StreetOrientedSimplexes}, and was then studied in detail by Duskin in \cite{duskin}.
Roughly speaking, the objects, $1$-morphisms and $2$-morphisms of the given $2$-category are incorporated suitably in the $0$-, $1$- and $2$-simplices of the Duskin nerve, which is $3$-coskeletal.

The Duskin nerve is a classical construction, and many of its homotopical properties have been established.
For instance, Duskin \cite{duskin} showed that the Duskin nerve of a $(2,0)$-category is always a Kan complex and that the Duskin nerve of a $(2,1)$-category is always a quasi-category. Bullejos, Carrasco, Cegarra, and Garz\'{o}n showed in different combinations that analogs of Quillen's Theorems A and B hold for the Duskin nerve of $2$-categories \cite{BullejosCegarra,cegarra}, and that the Duskin nerve is homotopically equivalent to other nerve constructions for $2$-categories \cite{CCG}. To mention one application, Nanda \cite{nanda} then built on their work showing that the Duskin nerve of the discrete flow $2$-category associated to a simplicial complex (with extra structure) has the same homotopy type as that of the simplicial complex.

While the machinery developed by Steiner \cite{ SteinerOrientals, SteinerSimpleOmega} implicitly provides general methods to study the Duskin nerve of $2$-categories, the combinatorics behind this nerve still remains quite mysterious. In \cite{BuckleyGarnerLackStreet}, Buckley, Garner, Lack and Street show that the Duskin nerve of a rather simple $2$-category (the monoidal category $([1], \min, 1)$) is the highly non-trivial ``Catalan simplicial set''.
Even for very simple $2$-categories we are however not aware of explicit computations and descriptions of the Duskin nerve.

As a first analysis in this direction, one could observe that in the nerve of finite $1$-categories with no non-identity endomorphisms there are only finitely many non-degenerate simplices, and imagine that the Duskin nerve of finite $2$-categories would enjoy the same property.
Somewhat surprisingly, we discovered that the Duskin nerve of $2$-categories is much more complex than expected. For instance, we show in \cref{section2} that the Duskin nerve of 
the free $2$-cell
\[
\begin{tikzcd}[row sep=3cm, column sep=2cm]
  x \arrow[r, bend left, "f_0", ""{name=U,inner sep=2pt,below}]
  \arrow[r, bend right, "f_1"{below}, ""{name=D,inner sep=2pt}]
  & y,
  \arrow[Rightarrow, from=U, to=D, "\alpha"]
\end{tikzcd}
\]
which is
a very simple $2$-category that does not contain any non-trivial composition,
has non-degenerate simplices in each dimension.

\begin{unnumberedproposition}
 \label{intrinsecdescription}
The Duskin nerve of the free $2$-cell has precisely two non-degenerate simplices in each positive dimension.
\end{unnumberedproposition}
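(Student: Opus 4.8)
The plan is to use the $3$-coskeletality of the Duskin nerve to obtain a complete list of the simplices of $ND(C)$, where $C$ denotes the free $2$-cell, and then to read off the non-degenerate ones by a short counting argument. First I would unwind the data of an $n$-simplex: by $3$-coskeletality it consists of objects $x_0,\dots,x_n$, a $1$-morphism $f_{ij}\colon x_i\to x_j$ for every $i<j$, and a $2$-morphism $\theta_{ijk}\colon f_{jk}\circ f_{ij}\Rightarrow f_{ik}$ for every $i<j<k$, subject to the triangle constraints together with a cocycle identity on each $4$-element subset. I then use that $C$ has only the objects $x,y$; only the $1$-morphisms $\mathrm{id}_x,\mathrm{id}_y,f_0,f_1\colon x\to y$; only the $2$-morphisms given by identities and $\alpha\colon f_0\Rightarrow f_1$; and only trivial compositions. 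In particular there is no $1$-morphism $y\to x$, and there is at most one $2$-morphism between any two parallel $1$-morphisms; the latter forces each $\theta_{ijk}$ to be uniquely determined by the $f_{ij}$'s whenever it exists, so every cocycle identity holds automatically (both sides are $2$-morphisms with the same source and target). Hence an $n$-simplex of $ND(C)$ is recorded exactly by a tuple $(x_i,f_{ij})$ for which all the required $\theta_{ijk}$ exist.

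Next I would classify such tuples. Since there is no morphism $y\to x$, the object string is of the form $x,\dots,x,y,\dots,y$, with $p$ copies of $x$ and $q=n+1-p$ of $y$; each $f_{ij}$ inside a single block is forced to be an identity, while each of the $pq$ ``crossing'' morphisms $f_{ij}$ (those with $x_i=x$, $x_j=y$) equals $f_0$ or $f_1$, recorded as $e(i,j)\in\{0,1\}$ via $f_0\mapsto 0$, $f_1\mapsto 1$. For a mixed triple $i<j<k$ (i.e.\ $x_i=x$, $x_k=y$) one has $x_j=x$ or $x_j=y$, and in either case the existence of $\theta_{ijk}$ --- a $2$-morphism between crossing edges, one whiskered by an identity --- reduces to a single inequality between two values of $e$; ranging over all mixed triples, these inequalities say precisely that $e$ is monotone in each variable (say non-increasing in the first and non-decreasing in the second). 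So the crossing data is exactly an order ideal of the product of the chains on $p$ and on $q$ elements, of which there are $\binom{p+q}{p}=\binom{n+1}{p}$, and therefore $\#\,ND(C)_n=\sum_{p=0}^{n+1}\binom{n+1}{p}=2^{n+1}$.

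Finally, write $a_m$ for the number of non-degenerate $m$-simplices. The Eilenberg--Zilber lemma gives $\#\,ND(C)_n=\sum_{m=0}^{n}\binom{n}{m}a_m$, the coefficient $\binom{n}{m}$ counting order-preserving surjections $[n]\twoheadrightarrow[m]$. Since $\#\,ND(C)_0=2$ gives $a_0=2$, an induction on $n$ yields, for all $n\ge 1$,
\[
a_n=2^{n+1}-\sum_{m=0}^{n-1}\binom{n}{m}a_m=2^{n+1}-2\sum_{m=0}^{n-1}\binom{n}{m}=2^{n+1}-2(2^n-1)=2 ,
\]
which is the assertion. If one also wants the two non-degenerate $n$-simplices explicitly, one can determine exactly when a simplex lies in the image of a degeneracy $s^i$ --- namely when $x_i=x_{i+1}$ and the two associated rows or columns of $e$ coincide --- and check that only the two ``(strictly) triangular'' order ideals on a $p\times q$ grid with $|p-q|\le 1$ survive.

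I expect the only real difficulty to be in the classification step: one must faithfully translate the combinatorics of the Duskin nerve --- the orientation of the $\theta_{ijk}$, the identification of faces, and the precise whiskerings in the cocycle condition --- into the minuscule $2$-category $C$, and be sure that the monotonicity conditions are the sole constraints (in particular that the cocycle identities are vacuous here, and that the degeneracy criterion used for the explicit description is complete). The orientation convention for $\theta$ only transposes the poset of crossing data, so it does not affect the count $\binom{n+1}{p}$; and once $\#\,ND(C)_n=2^{n+1}$ is in hand, the passage to $a_n=2$ is the displayed one-line induction.
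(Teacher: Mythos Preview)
Your argument is correct. The classification step is essentially the paper's \cref{simplicesasmatrices} specialised to $\cD=[1]$: your crossing array $e$ on a $p\times q$ grid, together with the monotonicity constraints you extract from the $2$-simplex conditions, is exactly a functor $[p-1]\times[q-1]^{\op}\to[1]$, and the count $\binom{p+q}{p}$ of order ideals matches the count of such functors. (Your orientation of the $\theta_{ijk}$ is opposite to the paper's, but as you note this only dualises the grid and leaves the enumeration unchanged.)

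Where you genuinely diverge is in the endgame. The paper, having the matrix description in hand, characterises non-degenerate $n$-simplices directly as matrices with pairwise distinct rows and columns; for $\cD=[1]$ this forces $|k-l|\le 1$ and pins down the two surviving matrices explicitly as the ``triangular'' patterns $\sigma_n,\sigma_n'$. You instead compute $\#\,ND(C)_n=2^{n+1}$ and invert the Eilenberg--Zilber identity $\#\,ND(C)_n=\sum_m\binom{n}{m}a_m$ by a one-line induction. Your route is slicker for the bare count and avoids invoking the general isomorphism $\mat\cD\cong N\Sigma\cD$; the paper's route yields the explicit simplices $\sigma_n,\sigma_n'$ and their face relations, which are what get used downstream. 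Your closing remark about the degeneracy criterion (repeated adjacent row/column of $e$) is exactly the paper's criterion in matrix language, so if you wanted the explicit simplices you would recover the same two.
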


This result was unexpected to us, and we were able to conjecture it in the first place only after having a computer produce all $n$-simplices of the Duskin nerve of the free $2$-cell for $n\le 6$.
In order to prove the proposition, we developed a more general study of the Duskin nerve of $2$-categories of the form $\Sigma\cD$, sometimes referred to as \emph{suspension $2$-categories}, of which the free $2$-cell is an example for $\cD=[1]$. 

The suspension $2$-category $\Sigma\cD$ of a category $\cD$, which can be pictured as
\[\Sigma\cD\ :=\ \ \ \ 
    \begin{tikzpicture}[baseline=-5pt]
    \draw (0,0)  node[inner sep=0.2cm](a){} node(x){$x$};
    \draw (2,0) node[inner sep=0.2cm](b){} node(y){$y$};
     \draw[->] (a) edge[bend right] node[below](xy){$\cD$}(b);
     \draw[->] (b) edge[bend right] node[above](xy){$\varnothing$}(a);
     \draw[->] (a.180) arc (30:330:2mm)node[xshift=-0.8cm](xx){$[0]$};
     \draw[->] (b.1) arc (-210:-510:2mm)node[xshift=0.8cm](yy){$[0],$};
    \end{tikzpicture}
\]
appears often in the literature as a special case of a simplicial suspension. For instance,
the homwise nerve $N_*(\Sigma\cD)$ of the suspension $\Sigma\cD$ is a simplicial category that agrees with what would be denoted as $U(N\cD)$ in \cite{bergner}, as $S(N\cD)$ in \cite{Joyal2007}, as $[1]_{N\cD}$ in \cite{htt}, 
and as ${\mathbbm{2}}[N\cD]$ in \cite{RiehlVerityNcoh}.

In \cref{section1} we
prove as \cref{simplicesasmatrices} the following description for the Duskin nerve of suspension $2$-categories.

\begin{thmalph}
\label{ThmA}
Let $\cD$ be a $1$-category.
An $n$-simplex of the Duskin nerve of the suspension $\Sigma{\cD}$ can be uniquely described as a functor $\sigma\colon[k]\times[l]^{\op}\to\cD$ together with $k,l\ge-1$ and $k+l=n-1$, which can be pictured as a ``matrix'' valued in $\cD$ of the form
\[{
\small \begin{tikzcd}
 d_{0l} \arrow{d} \arrow{r} & d_{0 (l-1)} \arrow{d} \arrow{r} &  \cdots \arrow{r}   & d_{0 0} \arrow{d}\\
  d_{1 l} \arrow{d}\arrow{r} & d_{1 (l-1)} \arrow{d} \arrow{r} &  \cdots \arrow{r}  & d_{1 0 } \arrow{d}\\
\vdots \arrow{d}& \vdots \arrow{d}  & \ddots & \vdots \arrow{d} \\
d_{k l} \arrow{r}  & d_{k (l-1)} \arrow{r}  & \cdots   \arrow{r} & d_{k 0},
\end{tikzcd}
}\]
and the simplicial structure is understood as suitably removing or doubling rows or columns.
\end{thmalph}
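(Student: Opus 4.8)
The plan is to unwind the definition of the Duskin nerve and recognize the resulting combinatorial data, in the case of $\Sigma\cD$, as a functor out of $[k]\times[l]^{\op}$. Recall that an $n$-simplex of $\Nduskin\cC$ is a normal oplax functor $[n]\to\cC$, i.e.\ objects $x_0,\dots,x_n$ of $\cC$, a $1$-morphism $F_{ij}\colon x_i\to x_j$ for each $i\le j$ with $F_{ii}=\id_{x_i}$, and a $2$-morphism $\alpha_{ijk}\colon F_{ik}\Rightarrow F_{jk}\circ F_{ij}$ for each $i\le j\le k$, normalized so that $\alpha_{iij}$ and $\alpha_{ijj}$ are identities and subject to the cocycle identity $\bigl(F_{kl}\ast\alpha_{ijk}\bigr)\circ\alpha_{ikl}=\bigl(\alpha_{jkl}\ast F_{ij}\bigr)\circ\alpha_{ijl}$ for each $i\le j\le k\le l$ (see \cite{duskin,StreetOrientedSimplexes}). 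First I would specialize to $\cC=\Sigma\cD$. Since $\Hom_{\Sigma\cD}(y,x)=\varnothing$, the requirement that all $F_{ij}$ exist forces the word $x_0x_1\cdots x_n$ to consist of a (possibly empty) block of copies of $x$ followed by a (possibly empty) block of copies of $y$; writing $p,q$ for the two block sizes one has $p,q\ge 0$ and $p+q=n+1$, and putting $k:=p-1$, $l:=q-1$ gives $k,l\ge-1$ and $k+l=n-1$.

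Next I would read off the remaining data. Because $\Hom_{\Sigma\cD}(x,x)=\Hom_{\Sigma\cD}(y,y)=[0]$, every $F_{ij}$ whose source and target lie in the same block is forced; the only genuine $1$-morphism data is the array $d_{ab}:=F_{a,\,b+p}\in\Hom_{\Sigma\cD}(x,y)=\cD$ for $(a,b)\in\{0,\dots,k\}\times\{0,\dots,l\}$, which is exactly an array of objects of $\cD$ of the shape displayed above. Running through the $2$-morphisms $\alpha_{ijk}$ by how many of $i\le j\le k$ lie in the $y$-block: zero or three forces $\alpha_{ijk}=\id$; exactly one (so $i,j$ are $x$-vertices, $k$ a $y$-vertex) makes $\alpha_{ijk}$ a morphism of $\cD$ between two entries of the same column; exactly two makes it a morphism between two entries of the same row. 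Splitting the cocycle identities the same way, the ``one $y$-vertex'' instances say the column morphisms compose as in a functor $[k]\to\cD$, the ``three $y$-vertices'' instances say the row morphisms compose as in a functor $[l]^{\op}\to\cD$ (the $\op$ arising because the columns of the displayed matrix are indexed in decreasing order of the corresponding $y$-vertex), and the ``two $y$-vertices'' instances say precisely that the resulting squares of $\cD$ commute; the remaining instances are vacuous. Thus the data amounts exactly to a functor $\sigma\colon[k]\times[l]^{\op}\to\cD$, and conversely any such $\sigma$ together with $k,l$ reassembles into a normal oplax functor $[n]\to\Sigma\cD$. Since the word $x_0\cdots x_n$ recovers $(k,l)$ and the $F$'s together with the ``mixed'' $\alpha$'s recover $\sigma$, this correspondence is a canonical bijection.

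Finally I would verify compatibility with the simplicial operators. A coface $d^i$ restricts the oplax functor along $[n-1]\hookrightarrow[n]$; on the array, omitting an $x$-vertex ($i<p$) deletes the corresponding row and omitting a $y$-vertex ($i\ge p$) deletes the corresponding column, which is precomposition of $\sigma$ with a coface of $[k]$ or of $[l]$. A codegeneracy $s^i$ repeats the vertex $x_i$, which on the array duplicates a row or a column with identity morphisms inserted, i.e.\ precomposition with a codegeneracy of $[k]$ or of $[l]$. Checking these identifications, together with the degenerate cases $k=-1$ or $l=-1$ (empty array, simplex constant at $y$ resp.\ at $x$), is routine bookkeeping.

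The step I expect to be the main obstacle is the case analysis of the cocycle identities, and in particular establishing that the ``two $y$-vertices'' instances are exactly the commutativity of the unit squares of the array: this requires tracking the whiskerings $F_{kl}\ast\alpha_{ijk}$ and $\alpha_{jkl}\ast F_{ij}$ and the order of vertical composition correctly, while several of the $2$-morphisms involved are oplax comparisons one of whose two factors is an identity, so that the normalization conditions must be invoked with care. A secondary, organizational difficulty is keeping the three layers of indices---the simplex dimension $n$, the block sizes $p,q$, and the matrix coordinates $k,l$ with their off-by-one shift---consistent throughout.
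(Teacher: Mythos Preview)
Your argument is correct and complete in outline: the case analysis of the $\alpha_{ijk}$ by the number of $y$-vertices, and of the cocycle identity by the number of $y$-vertices among $i\le j\le k\le l$, does exactly what you claim, and the bookkeeping for faces and degeneracies is straightforward. The only point where you should be a bit more explicit is the passage from ``column morphisms compose, row morphisms compose, and all mixed squares commute'' to ``functor out of the product $[k]\times[l]^{\op}$'': this is standard, but since you obtain morphisms $d_{a,b}\to d_{a',b}$ for \emph{all} $a\le a'$ (not only $a'=a+1$) and similarly in the row direction, you should observe that the normalization $\alpha_{iik}=\id$, $\alpha_{ijj}=\id$ supplies the identity axiom and that the one- and three-$y$ cocycles supply associativity, so that each row and each column is a genuine functor before you invoke the two-$y$ cocycles for the squares.

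Your route is genuinely different from the paper's. The paper does not unwind the oplax functor description in arbitrary dimension; instead it (i) packages the matrices into a simplicial set $\mat\cD$, (ii) proves directly that $\mat\cD$ is $3$-coskeletal, and (iii) writes down by hand the bijection between $\mat\cD$ and $N\Sigma\cD$ in dimensions $0$ through $3$, compatibly with the simplicial structure, invoking the $3$-coskeletality of the Duskin nerve to conclude. What your approach buys is a uniform, dimension-independent argument that avoids the separate coskeletality proof for $\mat\cD$ (which in the paper is a nontrivial proposition with its own inductive argument). What the paper's approach buys is that it never needs the general oplax-functor description of Duskin simplices---only the explicit low-dimensional description recalled at the start of \cref{section1}---and it yields the $3$-coskeletality of $\mat\cD$ as an independent structural fact, which is then reused (e.g.\ for the $(r+1)$-point suspension and in the appendix).
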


After having understood the Duskin nerve of suspension $2$-categories, we then study the Duskin nerve of $(r+1)$-point suspension $2$-categories $\Sigma[\cD_1,\dots,\cD_r]$, which are $2$-categories obtained by pasting together suspension $2$-categories of categories $\cD_1\dots,\cD_r$ along objects as in the following picture:
\[\Sigma[\cD_1,\dots,\cD_{r}]\ :=
    \begin{tikzpicture}[baseline=0pt]
    \draw (0,0)  node[inner sep=0.2cm](a){} node(x0){$x_0$};
    \draw (2,0) node[inner sep=0.2cm](b){} node(x1){$x_1$};
    \draw (4,0) node[inner sep=0.2cm](c){} node(x2){$x_2$};
    \draw (6,0) node[inner sep=0.2cm](d){} node(dots){$\ldots$};
    \draw (8,0) node[inner sep=0.2cm](e){} node(x_r){$x_r$.};
     \draw[->] (a) to[bend right] node[below](xy1){$\cD_1$}(b);
     \draw[->] (b) to[bend right] node[above](xy2){$\varnothing$}(a);
       \draw[->] (b) to[bend right] node[below](xy3){$\cD_2$}(c);
     \draw[->] (c) to[bend right] node[above](xy4){$\varnothing$}(b);
       \draw[->] (c) to[bend right] node[below](xy5){$\cD_3$}($(d)+(-0.4, -0.1)$);
     \draw[->] ($(d)+(-0.4, 0.1)$) to[bend right] node[above](xy6){$\varnothing$}(c);
       \draw[->] ($(d)+(0.4, -0.1)$) to[bend right] node[below](xy7){$\cD_r$}(e);
     \draw[->] (e) to[bend right] node[above](xy8){$\varnothing$}($(d)+(0.4, 0.1)$);
    \end{tikzpicture}
\]

This type of horizontal gluing of suspension $2$-categories appears e.g.~in \cite[\textsection 10.5]{VerityComplicialAMS}, and is an instance of a $2$-category freely generated by a $\cat$-graph, a construction which goes all the way back to \cite{wolff}.
Motivating examples of $(r+1)$-point suspension $2$-categories are the $2$-categories that belong to Joyal's cell category $\Theta_2$ from \cite{JoyalDisks}, which are all $(r+1)$-point suspension $2$-categories of the form $[r|n_1,\dots,n_r]\cong\Sigma[[n_1],\dots,[n_r]]$
for $n_1,\dots,n_r\ge0$. An example would be the $2$-category $[3|2,0,1]$, which is generated by the following data:
\[
\begin{tikzcd}[row sep=3.2cm, column sep=2.2cm]
  x \arrow[r, bend left=50, "f", ""{name=U,inner sep=2pt,below}]
  \arrow[r, "g"{near end, xshift=0.2cm}, ""{name=D,inner sep=2pt},""{name=M,inner sep=2pt, below}]
  \arrow[r, bend right=50, "h"{below}, ""{name=DD,inner sep=2pt, xshift=0.05cm}]
  & y\arrow[r, "l"]
    & [-0.9cm]
    z\arrow[r, bend left=50, "m", ""{name=U1,inner sep=2pt,below}]
  \arrow[r, "k"{near end, xshift=0.2cm}, ""{name=D1,inner sep=2pt, xshift=0.05cm},""{name=M1,inner sep=2pt, below}]
  & w.
    \arrow[Rightarrow, from=U, to=D, "\alpha"]
    \arrow[Rightarrow, from=M, to=DD, "\beta"{near start}]  
  \arrow[Rightarrow, from=U1, to=D1, "\gamma"]
\end{tikzcd}\]

We are able to describe the Duskin nerve of $(r+1)$-point suspension $2$-categories in \cref{section3}; the precise statement will appear as \cref{nerver+1suspension}.

\begin{thmalph}
\label{ThmB}
Let $\cD_1,\dots,\cD_r$ be given $1$-categories.
An $n$-simplex of the Duskin nerve of the $(r+1)$-point suspension $\Sigma[\cD_1,\dots,\cD_r]$ can be uniquely described as an $(r+1)$-uple of
$\cD_i$-valued matrices whose numbers of rows are suitably increasing.
\end{thmalph}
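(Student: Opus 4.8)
The plan is to deduce the statement from \cref{ThmA} by ``slicing'' each $n$-simplex of the Duskin nerve over the $r$ levels of the $(r+1)$-point suspension. The first thing I would do is record the explicit $2$-categorical structure of $\Sigma[\cD_1,\dots,\cD_r]$. Realized as the $2$-category freely generated by a $\cat$-graph in the sense of \cite{wolff}, it has object set $\{0,\dots,r\}$; its hom-category from $x_a$ to $x_b$ is the product $\prod_{a<m\le b}\cD_m$ when $a\le b$ (in particular the terminal category when $a=b$) and is empty when $a>b$; and both horizontal composition of $1$-cells and horizontal composition of $2$-cells are given by concatenation of tuples. The feature that drives the whole argument is that composition never mixes distinct levels: the $m$-th component of a composite depends only on the $m$-th components of the factors.

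Next I would unwind an $n$-simplex of $\Nduskin(\Sigma[\cD_1,\dots,\cD_r])$ as a normal lax $2$-functor $[n]\to\Sigma[\cD_1,\dots,\cD_r]$, i.e.\ a family of objects $\sigma_i$ for $0\le i\le n$, of $1$-morphisms $\sigma_{ij}\colon\sigma_i\to\sigma_j$ for $i\le j$, and of $2$-morphisms $\sigma_{ijk}\colon\sigma_{jk}\sigma_{ij}\Rightarrow\sigma_{ik}$ for $i\le j\le k$, subject to normality and the cocycle identity over each $i\le j\le k\le l$. Since the hom-category from $x_a$ to $x_b$ is empty for $a>b$, the assignment $i\mapsto\sigma_i$ amounts precisely to a monotone map $\phi\colon[n]\to[r]$ with $\sigma_i=x_{\phi(i)}$. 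Fixing such a $\phi$, I would show that all the remaining data splits as a product indexed by $m\in\{1,\dots,r\}$: the $m$-th factor consists of the components $\sigma_{ij}^{(m)}$, defined exactly when $\phi(i)<m\le\phi(j)$, together with the corresponding $2$-cell components $\sigma_{ijk}^{(m)}$. Because composition in the target is componentwise concatenation, normality and the cocycle identity hold separately in each level, so an $n$-simplex lying over $\phi$ is exactly a tuple of such level-wise pieces.

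Each level-wise piece is, by the previous step, the data of an $n$-simplex of the Duskin nerve of the single suspension $\Sigma\cD_m$, to which \cref{ThmA} applies. Writing $k_m:=\#\{i:\phi(i)<m\}-1$ and $l_m:=\#\{i:\phi(i)\ge m\}-1$, one has $k_m,l_m\ge-1$ and $k_m+l_m=n-1$, and \cref{ThmA} identifies the $m$-th piece with a functor $[k_m]\times[l_m]^{\op}\to\cD_m$, that is, a $\cD_m$-valued matrix with $k_m+1$ rows and $l_m+1$ columns. Since $\{i:\phi(i)<m\}$ grows with $m$, the sequence of row-numbers $k_1+1\le\dots\le k_r+1$ is weakly increasing; conversely such a weakly increasing sequence with values between $0$ and $n+1$ reconstructs $\phi$, the column-numbers then being forced by $l_m+1=(n+1)-(k_m+1)$. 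Thus the combinatorics of the horizontal gluing is recorded exactly by the row-numbers of the matrices, and reassembling everything an $n$-simplex becomes precisely a tuple of $\cD_m$-valued matrices with suitably increasing row-numbers, the packaging being made precise in \cref{nerver+1suspension}.

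I expect the main obstacle to be the verification that normality and, above all, the cocycle identity genuinely decouple over the levels $m$: this rests on the concatenation formula for horizontal composition together with a careful decomposition of a whiskered pasting of $2$-cells of $\Sigma[\cD_1,\dots,\cD_r]$ into its level components. A secondary source of bookkeeping is matching the block decomposition of $[n]$ induced by $\phi$ against the row/column indexing of \cref{ThmA}, and in particular handling the degenerate cases where a level contributes no rows or no columns ($k_m=-1$ or $l_m=-1$), i.e.\ where $\phi$ skips a level or is concentrated near one end of $[n]$.
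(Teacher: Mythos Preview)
Your proposal is correct and reaches the same destination as the paper, but by a more hands-on route. The paper does not unpack individual $n$-simplices as normal lax functors and then check that the cocycle identity decouples level by level. Instead, it first proves (\cref{MultisuspensionPullback}) that there is a pullback square of $2$-categories
\[
\Sigma[\cD_1,\dots,\cD_r]\;\cong\;[r]\times_{[1]^r}\bigl(\Sigma\cD_1\times\dots\times\Sigma\cD_r\bigr),
\]
verified simply at the level of object sets and hom-categories, and then invokes that the Duskin nerve, being a right adjoint, preserves this pullback. Combined with \cref{simplicesasmatrices} and the description of $\Delta[r]\hookrightarrow\Delta[1]^r$ in \cref{rmk2}, this immediately yields the tuple-of-matrices description with increasing row-numbers. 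Your decoupling of the cocycle identity is exactly what the pullback-of-$2$-categories statement encodes, so the ``main obstacle'' you anticipate dissolves once you work one categorical level up; conversely, your approach has the virtue of being entirely explicit and of not needing to know that the Duskin nerve is a right adjoint. Note also that your argument rests on the identification of $n$-simplices with normal lax functors $[n]\to\cC$, which the paper never states (it uses the $3$-coskeletal description throughout), so you would need to cite or prove that equivalence separately.
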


 The explicit description of the Duskin nerve of $2$-categories from this paper can then also be used to prove finer homotopical properties. 
For instance, in ongoing work \cite{ORfundamentalpushouts}, we use these results to show that the canonical inclusion
\[N(\Sigma[1])\underset{N(\Sigma[0])}{\amalg}N(\Sigma[1]) \hookrightarrow N(\Sigma[2])\]
is a categorical equivalence, and even a weak equivalence of $(\infty,2)$-categories in the model of $2$-complicial sets.

\addtocontents{toc}{\protect\setcounter{tocdepth}{1}}
\subsection*{Acknowledgements}
We would like to thank Clark Barwick, Andrea Gagna, Lennart Meier and Emily Riehl for helpful conversations, and the referee for insightful comments.

  \section{The Duskin nerve of suspension $2$-categories}

\label{section1}
\label{descriptionDuskinnerve}

We start by recalling the definition of the Duskin nerve of $2$-categories\footnote{In this paper we are only concerned with \emph{strict} $2$-categories.}.

\begin{defn}
The \emph{Duskin nerve} $N(\cC)$ of a $2$-category $\cC$ is a $3$-coskeletal simplicial set in which
\begin{enumerate}[leftmargin=*]
\item[(0)] a $0$-simplex consists of an object of $\cC$:
$$x;$$
    \item a $1$-simplex consists of a $1$-morphism of $\cC$:
     \[
\begin{tikzcd}
    x \arrow[rr, "a"{below}]&& y;
\end{tikzcd}
\]
    \item a $2$-simplex consists of a $2$-cell of $\cC$ of the form $c\Rightarrow b\circ a$:
    $$\begin{tikzcd}[baseline=(current  bounding  box.center)]
 & y \arrow[rd, "b"]  & \\
    x \arrow[ru, "{a}"]
  \arrow[rr, "c"{below}, ""{name=D,inner sep=1pt}]
  && z;
  \arrow[Rightarrow, from=D, 
 to=1-2, shorten >= 0.1cm, shorten <= 0.1cm, ""]
\end{tikzcd}$$
\item a $3$-simplex consists of four $2$-cells of $\cC$ that satisfy the following relation.
\[
\simpfverArrow{d}{c}{e}{a}{b}{f}{}{}{}\simpfverArrowcontinued{}{x}{y}{z}{w}
\]
\end{enumerate}
The simplicial structure of $N\cC$ is as indicated in the pictures.
\end{defn}

The following type of $2$-category is of interest in this paper. We denote by $[-1]$ the empty category.

\begin{defn}
\label{suspension}
The \emph{suspension} of a $1$-category $\cD$ is the $2$-category $\Sigma\cD$ with two objects $x,y$ and hom categories given by
\[
\Map_{\Sigma\cD}(x,y)=\cD,\ \Map_{\Sigma\cD}(y,x)=[-1],\ \Map_{\Sigma\cD}(x,x)=\Map_{\Sigma\cD}(y,y)=[0].
\]
\end{defn}

\begin{rmk}
Given the fact that $\Sigma\cD$ has only two objects, for $n\ge2$ any $n$-simplex $\sigma$ in $N\Sigma\cD$ has exactly zero or one non-degenerate $1$-simplices of the form $(k,k+1)$. More precisely,
\begin{itemize}[leftmargin=*]
    \item[(a)] if the simplex $\sigma$ is the degeneracy of one of the $0$-simplices $x$ or $y$, each edge of $\sigma$ is degenerate at the same vertex $x$ or $y$.
    \item[(b)] if the simplex is not the degeneracy of a $0$-simplex, it has precisely one non-degenerate edge of the form $(k,k+1)$ for some $0\le k\le n-1$.
\end{itemize}
\end{rmk}

The fact that these two very different behaviours partition the simplices of $N\Sigma\cD$ is fundamental, and we therefore make the following definition.

\begin{defn}
For $n\ge1$ we say that an $n$-simplex $\sigma$ in $N\Sigma\cD$ is
\begin{itemize}[leftmargin=*]
    \item[(a)] \emph{maximally degenerate} if it is the degeneracy of one of the $0$-simplices $x$ or $y$.
    \item[(b)] \emph{of type $k$} for $0\le k\le n-1$ if it has one non-degenerate $1$-simplex of the form $(k,k+1)$.
\end{itemize}
\end{defn}

In particular, it is consistent to think of the maximally degenerate $n$-simplex of the $0$-simplex $y$ as the (unique) $n$-simplex \emph{of type $-1$} and the maximally degenerate $n$-simplex of the $0$-simplex $x$ as the (unique) $n$-simplex \emph{of type $n$}, with the type $k$ being determined by the formula
\[k=\left\{
    \begin{array}{ll}
    -1&\text{if $\sigma$ is constant at $y$}\\
       \max\{0\le i\le n\ |\ \sigma(i)=x\}  &  \text{if $\sigma$ is not constant}\\
      n   & \text{if $\sigma$ is constant at $x$.}
    \end{array}
    \right.\]

\begin{ex}
The $3$-simplex of $N\Sigma\cD$
\[
\simpfverArrow{\id_x}{c}{\id_y}{a}{b}{f}{\varphi}{\gamma}{\psi}\simpfverArrowcontinued{\theta}{x}{x}{y}{y},
\]
is of type $1$, whereas the $3$-simplex
\[
\simpfverArrow{\id_x}{\id_x}{c}{a}{\id_x}{f}{\theta\psi}{\id_x}{\psi}\simpfverArrowcontinued{\theta}{x}{x}{x}{y},
\]
is of type $2$.
\end{ex}

\begin{rmk}
\label{simplicesDelta1}
Any $n$-simplex of $\Delta[1]$ is of the form $\chi_{k}\colon[n]\to[1]$ for a unique $k=-1,\dots,n$, with $\chi_k$ defined on objects by
\[\chi_k\colon i\mapsto\left\{
\begin{array}{rcl}
0&\mbox{ if }i\le k\\
1&\mbox{ if }i>k.
\end{array}\right.\]
\end{rmk}

\begin{rmk}
\label{NSigmaDoverDelta[1]} 
For any category $\cD$, the unique functor $!\colon\cD\to[0]$ induces a canonical functor $\Sigma\cD\to[1]$, which in turns defines a morphism of simplicial sets \[p_{\cD}\colon N\Sigma\cD\to\Delta[1],\]
given by
\[
p_{\cD}(\sigma)=\left\{
\begin{array}{lll}
\chi_{-1}&\text{ if $\sigma$ is maximally degenerate at $y$}\\
\chi_{k}&\text{ if $\sigma$ is of type $k$}\\
\chi_n&\text{ if $\sigma$ is maximally degenerate at $x$.}
\end{array}
\right.\]
Roughly speaking, the value of $p_{\cD}$ on an $n$-simplex $\sigma$ computes the type $k$.
\end{rmk}

We see by definition that that the Duskin nerve of the suspension $2$-category $\Sigma\cD$ has exactly two $0$-simplices, corresponding to the objects $x$ and $y$ of $\Sigma\cP$, and we want further to identify each (non-maximally-degenerate) $n$-simplex with a functor
$\sigma\colon [k]\times [l]^{\op} \to\cD$, which is in turn completely described by $(k+1)\times(l+1)$ objects of $\cD$ connected horizontally and vertically by morphisms of $\cD$
\[{
\scriptsize \begin{tikzcd}
 d_{0l} \arrow{d} \arrow{r} & d_{0 (l-1)} \arrow{d} \arrow{r} &  \cdots \arrow{r}   & d_{0 0} \arrow{d}\\
  d_{1 l} \arrow{d}\arrow{r} & d_{1 (l-1)} \arrow{d} \arrow{r} &  \cdots \arrow{r}  & d_{1 0 } \arrow{d}\\
\vdots \arrow{d}& \vdots \arrow{d}  & \ddots & \vdots \arrow{d} \\
d_{k l} \arrow{r}  & d_{k (l-1)} \arrow{r}  & \cdots   \arrow{r} & d_{k 0},
\end{tikzcd}
}\]
such that all the resulting squares are all commutative.

To this end, we first discuss how the collections of such morphisms assemble into a simplicial set.
We follow the convention that $[-1]=\varnothing$ is the empty category.

\begin{prop}\label{matrixsset}
Let $\cD$ be a category.
The collection
\[\matn n{\cD}:=\{\left(k,l,\sigma\colon [k]\times [l]^{\op} \to\cD\right)\ |\ k,l\ge-1,\ k+l=n-1\}\]
for $n\ge0$ defines a simplicial set $\mat{\cD}$ with respect to the following simplicial structure.
The faces and degeneracies of a $\cD$-valued matrix $\sigma\colon [k]\times [l]^{\op} \to\cD$ are given by
 \[
 d_i\sigma=
 \left\{\begin{array}{lr}
 \sigma (d^i\times\id_{[l]^{\op}}) \colon [k-1]\times [l]^{\op} \to \cD&\mbox{ for } 0\leq i\leq k,\\
 \sigma (\id_{[k]}\times d^{i-(k+1)})\colon [k]\times [l-1]^{\op} \to \cD&\mbox{ for } k+1\leq i\leq n;
 \end{array}\right.
 \]
 \[
 s_i\sigma=
 \left\{\begin{array}{lr}
 \sigma (s^i\times\id_{[l]^{\op}}) \colon [k+1]\times [l]^{\op} \to \cD &\mbox{ for } 0\leq i\leq k,\\
 \sigma (\id_{[k]}\times s^{i-(k+1)})\colon [k]\times [l+1]^{\op} \to \cD& \mbox{ for } k+1\leq i\leq n.
 \end{array}\right.
 \]
\end{prop}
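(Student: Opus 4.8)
The assertion of \cref{matrixsset} is a verification of the simplicial identities, which I would organize so that all the index bookkeeping is absorbed into the ordinal-sum decomposition $[n]=[k]\star[n-1-k]$. First, by \cref{simplicesDelta1} a pair $(k,l)$ with $k,l\ge-1$ and $k+l=n-1$ is exactly the datum of the $n$-simplex $\chi_{k}$ of $\Delta[1]$, with $l=n-1-k$; so a priori an element of $\matn{n}{\cD}$ records, among its data, an $n$-simplex of $\Delta[1]$, and the construction below will be designed to respect this. I would then define, for every $\phi\colon[m]\to[n]$ in $\Delta$, an operation $\phi^{*}\colon\matn{n}{\cD}\to\matn{m}{\cD}$ as follows. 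Given $(k,n-1-k,\sigma)$, put $k':=\max\{\,i\in[m]\ :\ \phi(i)\le k\,\}$, with the convention $k':=-1$ if this set is empty (equivalently, $k'$ is determined by $\chi_{k}\circ\phi=\chi_{k'}\colon[m]\to[1]$). By monotonicity $\phi$ restricts to order-preserving maps $\{0,\dots,k'\}\to\{0,\dots,k\}$ and $\{k'+1,\dots,m\}\to\{k+1,\dots,n\}$, and relabelling these index sets downwards (by $k'+1$ and by $k+1$) turns them into $\bar\phi\colon[k']\to[k]$ and $\hat\phi\colon[m-1-k']\to[n-1-k]$; one then sets $\phi^{*}(k,n-1-k,\sigma):=(k',\,m-1-k',\,\sigma\circ(\bar\phi\times\hat\phi^{\op}))$. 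The cases $k=-1$ and $l=-1$ are trivial, the relevant functor sets being singletons.

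The substance of the argument is checking that $\phi\mapsto\phi^{*}$ is functorial. Since $\id^{*}=\id$ is immediate, it suffices to treat a composite $\psi\colon[p]\to[m]$ followed by $\phi\colon[m]\to[n]$ and a fixed $k$: writing $k'=\max\{i:\phi(i)\le k\}$ and $k''=\max\{j:\psi(j)\le k'\}$, the key point is the equality of sets $\{\,j\in[p]\ :\ \psi(j)\le k'\,\}=\{\,j\in[p]\ :\ \phi(\psi(j))\le k\,\}$: the inclusion $\subseteq$ is monotonicity of $\phi$, and $\supseteq$ holds because such a $j$ makes $\psi(j)$ a member of $\{\,i\in[m]\ :\ \phi(i)\le k\,\}$, hence $\psi(j)\le k'$. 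This shows the assignment $k\mapsto k'$ respects composition (so that $\phi^{*}$ and $\psi^{*}$ compose over matching indices), and then the relations $\overline{\phi\psi}=\bar\phi\circ\bar\psi$ and $\widehat{\phi\psi}=\hat\phi\circ\hat\psi$ hold on the nose --- the two downward relabellings being compatible precisely because $\psi$ carries $\{0,\dots,k''\}$ into $\{0,\dots,k'\}$. Hence $[n]\mapsto\matn{n}{\cD}$, $\phi\mapsto\phi^{*}$, is a functor $\Delta^{\op}\to\set$, i.e.\ a simplicial set $\mat{\cD}$.

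Finally I would unwind $\phi^{*}$ for the generating maps and check that it recovers the stated formulas: for $\phi=d^{i}$ with $i\le k$ one computes $k'=k-1$ and $\hat\phi=\id$, giving $d_{i}\sigma=\sigma(d^{i}\times\id_{[l]^{\op}})$, while for $i\ge k+1$ one gets $k'=k$, $\bar\phi=\id$ and $\hat\phi=d^{i-(k+1)}$, giving $d_{i}\sigma=\sigma(\id_{[k]}\times d^{i-(k+1)})$; the degeneracies $s_{i}$ are handled the same way, with $k'=k+1$, $\bar\phi=s^{i}$, $\hat\phi=\id$ for $i\le k$, and $k'=k$, $\bar\phi=\id$, $\hat\phi=s^{i-(k+1)}$ for $i\ge k+1$. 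In each case the apparent index shift in $\bar\phi$ or $\hat\phi$ is exactly cancelled by the downward relabelling, which is the one subtlety worth being careful about. The main obstacle, such as it is, is entirely organizational: one must either install this functorial packaging cleanly, so that the two relabellings compose without off-by-one errors, or else verify the simplicial identities by a direct case analysis on the positions of the two indices relative to $k$; I would take the former route.
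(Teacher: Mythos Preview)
Your argument is correct. The paper's own proof is a one-sentence assertion that the simplicial identities can be verified by direct computation with the given face and degeneracy formulas; you instead build the full contravariant action $\phi\mapsto\phi^{*}$ at once via the ordinal-sum splitting $[n]=[k]\star[l]$ and prove functoriality directly, only afterwards specialising to the generating cofaces and codegeneracies. This is a genuinely different organisation: the paper's route (left to the reader) would require a case analysis on the relative positions of two indices with respect to $k$ for each of the five families of simplicial identities, whereas your packaging reduces everything to the single observation that $\{j:\psi(j)\le k'\}=\{j:\phi\psi(j)\le k\}$, from which $\overline{\phi\psi}=\bar\phi\,\bar\psi$ and $\widehat{\phi\psi}=\hat\phi\,\hat\psi$ follow mechanically. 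Your approach also makes the map $q_{\cD}\colon\mat\cD\to\Delta[1]$ of \cref{MatDoverDelta[1]} visible from the outset, since your $k'$ is precisely determined by $\chi_{k}\circ\phi=\chi_{k'}$; and it is essentially an explicit unpacking of the functor $D^{-1}$ described in the Aside following \cref{matrixcoskeletal}. The trade-off is that the paper's direct-verification route, while more tedious, needs no additional notation.
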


\begin{proof}
The fact that $\mat{\cD}$ is a simplicial set can be verified by means of a straightforward computation involving the explicit given formulas for faces and degeneracies.
\end{proof}

\begin{rmk}
Given the isomorphism $[l]\cong[l]^{\op}$, to describe the set of $n$-simplices of $\mat\cD$ it is a priori not necessary to include the $\op$. However, including it simplifies the explicit formulas describing the simplicial structure of $\mat\cD$.
\end{rmk}

\begin{rmk}
The set $\matn{0}{\cD}$ has exactly two elements, given by \[(-1,0,!\colon\varnothing=[-1]\times[0]^{\op}\to\cD)\text{ and }(0,-1,!\colon\varnothing=[0]\times[-1]^{\op}\to\cD).\]
More generally, for any $n\ge0$ there is a unique element of $\matn{n}\cD$ with $l=-1$, namely $(n,-1,!)$, and a unique element with $k=-1$, namely $(-1,n,!)$, corresponding to the iterated degeneracies of the two $0$-simplices.
When instead $k,l\ge0$, the component $\sigma\colon[k]\times[l]^{\op}\to\cD$ determines uniquely the element $(k,l,\sigma)$ of $\matn n \cD$.
In conclusion, the set of $n$-simplices of $\mat\cD$ is given by 
 \[\matn n\cD=\{(-1,n,!)\}\amalg\coprod_{k,l\ge0,k+l=n-1}\{\sigma\colon[k]\times[l]^{\op}\to\cD\}\amalg\{(n,-1,!)\}.\] 
\end{rmk}

  \begin{rmk}
When $k,l\ge0$, the element $(k,l,\sigma)$ consists of an honest functor $[k]\times[l]^{\op}\to\cD$, which can be thought of as a \emph{matrix}\footnote{We warn the reader that the use of matrices from this paper is not directly related with the matrices used by Duskin in \cite{duskin}.} valued in $\cD$. By contrast, one may suggestively think of the $n$-simplices $(-1,n,!\colon[-1]\times[n]^{\op}\to\cD)$ and $(n,-1,!\colon[n]\times[-1]^{\op}\to\cD)$ as the \emph{empty row}, resp.~ the \emph{empty column}, of length $n$, and call all simplices of $\mat\cD$ \emph{matrices}. According to this interpretation, roughly speaking in the simplicial set $\mat{\cD}$:
 \begin{enumerate}[leftmargin=*]
    \item faces are given by removing precisely one row or one column of a matrix;
    \item degeneracies are given by doubling precisely one row or one column of a matrix;
    \item the non-degenerate simplices are the matrices where no two consecutive rows and no two consecutive columns coincide.
 \end{enumerate}
\end{rmk}

\begin{defn}
For $k,l\ge-1$, $n=k+1+l$ and $\cD$ a $1$-category, we call an element $(k,l,\sigma\colon[k]\times[l]^{\op}\to\cD)$ of $\matn n\cD$ a \emph{$\cD$-valued matrix}.
\end{defn}

\begin{convention}
For simplicity of exposition, we will refer to the matrix $(k,l,\sigma\colon[k]\times[l]^{\op}\to\cD)$ of $\matn n\cD$ as just $\sigma\colon[k]\times[l]^{\op}\to\cD$, assuming implicitly that $k$ and $l$ are part of the data. In particular, the matrix $[-1]\times[n]^{\op}\to\cD$ is different as an element of $\matn n \cD$ from the matrix $[n]\times[-1]^{\op}\to\cD$, if $n\ge0$ and $\cD$ is not empty.
\end{convention}

\begin{rmk}
\label{MatDoverDelta[1]}
There is an isomorphism of simplicial sets \[\Phi\colon\mat{[0]}\cong\Delta[1]\]
given by $(!\colon[k]\times[l]^{\op}\to[0])\mapsto \chi_k$.
More generally, for any category $\cD$ there is a map of simplicial sets
\[q_{\cD}\colon\mat\cD\to\mat{[0]}\cong\Delta[1]\]
given by
$q_{\cD}(\sigma\colon[k]\times[l]^{\op}\to\cD)= \chi_k$, making use of the notation from \cref{simplicesDelta1}.
Roughly speaking, the value of $q_{\cD}$ on an $n$-simplex $\sigma\colon[k]\times[l]^{\op}\to\cD$ encodes the number of rows $k$, and implicitly also also the number of columns as $l=n-k-1$.
 \end{rmk}

\begin{prop}
\label{matrixcoskeletal}
The simplicial set $\mat\cD$ is $3$-coskeletal.
\end{prop}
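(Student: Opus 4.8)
The plan is to verify directly that the canonical map $\matn n\cD\to\Hom(\partial\Delta[n],\mat\cD)$ is a bijection for every $n\ge4$; this is exactly the condition that $\mat\cD$ be $3$-coskeletal, an element of $\Hom(\partial\Delta[n],\mat\cD)$ being a family $(b_0,\dots,b_n)$ in $\matn{n-1}\cD$ with $d_ib_j=d_{j-1}b_i$ for $i<j$. So I fix $n\ge4$ and such a compatible family and must extend it to a unique $n$-simplex. First I pin down the shape of any possible extension: postcomposing $(b_0,\dots,b_n)$ with the map $q_\cD\colon\mat\cD\to\mat{[0]}\cong\Delta[1]$ of \cref{MatDoverDelta[1]} gives a compatible family in $\Delta[1]$, and since $\Delta[1]$ is $1$-coskeletal (as is every standard simplex), this family is the boundary of a unique $n$-simplex $\chi_k$ of $\Delta[1]$ for some $k\in\{-1,0,\dots,n\}$. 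Unwinding, each $b_i$ must have shape $[k-1]\times[n-1-k]^{\op}$ for $i\le k$ and shape $[k]\times[n-2-k]^{\op}$ for $i>k$, and any extension must be a matrix $\sigma$ of shape $[k]\times[n-1-k]^{\op}$. I then argue by cases on $k$.

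For $k=-1$ the family consists of empty rows and the only candidate is $(-1,n,!)$; for $k=n$ it consists of empty columns and the only candidate is $(n,-1,!)$; a glance at the face formulas of \cref{matrixsset} confirms each has the prescribed boundary, giving existence and uniqueness. For $k=0$ the simplex $b_0$ is an empty row, and after the reindexing dictated by the formulas of \cref{matrixsset} the remaining family $(b_1,\dots,b_n)$ is a compatible boundary in the nerve $N\cD$; as $N\cD$ is classically $2$-coskeletal and $n-1\ge3$, it extends uniquely to an $(n-1)$-simplex of $N\cD$, i.e.\ to a matrix $\sigma$ of shape $[0]\times[n-1]^{\op}$, and $d_0\sigma=b_0$ holds by inspection. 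The case $k=n-1$ is symmetric, with $\sigma$ a single column. So in all these cases the statement reduces to the $1$-coskeletality of $\Delta[1]$ and the $2$-coskeletality of $N\cD$.

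The remaining case $1\le k\le n-2$ is where the work lies: here the sought extension $\sigma\colon[k]\times[l]^{\op}\to\cD$ is a genuine matrix with $k\ge1$, $l:=n-1-k\ge1$ and $k+l=n-1\ge3$. I reconstruct $\sigma$ cell by cell. Each object $d_{ab}$ of $\sigma$ and each of its horizontal morphisms occurs, up to the reindexing in \cref{matrixsset}, as the corresponding cell of $b_j$ for any row index $j\le k$ with $j\ne a$ (such $j$ exists because $k\ge1$), and each vertical morphism of $\sigma$ occurs as a cell of an appropriate column-deletion face $b_{k+1+c}$ (such a face of the right kind exists because $l\ge1$); the relations $d_ib_j=d_{j-1}b_i$ together with the cosimplicial identities make these choices independent of $j$ and $c$ and mutually consistent, yielding a well-defined grid of objects together with horizontal and vertical ``cover'' morphisms. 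To see this grid underlies a functor on the poset $[k]\times[l]^{\op}$ it remains only to check that each elementary square commutes; but $k+l\ge3$ forces $k\ge2$ or $l\ge2$, and in either case the four objects and four morphisms making up such a square already lie inside one of the faces $b_i$ — which is itself a genuine commutative matrix — so the square commutes. Finally $d_i\sigma=b_i$ for all $i$ follows from the same formulas, and uniqueness is immediate since every cell of any extension is recorded in one of the faces $b_i$. The main obstacle is precisely this last case: there is no conceptual difficulty beyond the two classical coskeletality inputs, but one must carefully track the index shifts in the face and degeneracy formulas and verify both that the reconstructed grid really is a commutative matrix and that it has the prescribed boundary — and it is the commutativity of the squares that genuinely needs the hypothesis $n\ge4$.
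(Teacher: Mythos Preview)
Your proof is correct and follows essentially the same strategy as the paper's: use $q_{\cD}$ and the $1$-coskeletality of $\Delta[1]$ to pin down the shape $[k]\times[l]^{\op}$ of any filler, then reconstruct the matrix from the faces using that $k+l\ge3$ always leaves an index free to avoid any given small configuration. The only differences are organizational---you peel off the border cases $k\in\{0,n-1\}$ via the $2$-coskeletality of $N\cD$ and phrase the generic case in terms of generating morphisms and elementary-square relations, whereas the paper treats all $0\le k\le n-1$ uniformly, defines $\widetilde\tau$ directly on arbitrary morphisms, and verifies functoriality by bootstrapping general compositions from ``four-index'' ones.
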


\begin{proof}
In order to show that $\mat\cD$ is $3$-coskeletal, we assume $n\ge4$ and show that any simplicial map $\tau\colon \partial\Delta[n] \to \mat{\cD}$ can be extended to an $n$-simplex $\widetilde{\tau}\colon\Delta[n]\to\mat\cD$ in $\mat\cD$, and that this can be done in a unique way.

The simplicial map ${\tau}\colon \partial\Delta[n] \to \mat{\cD}$ consists of a family of $(n-1)$-simplices $\tau_i$ of $\mat\cD$ for $0\leq i \leq n$, subject to the condition $d_i\tau_j=d_{j-1}\tau_i$ for $0\leq i < j \leq n$. Since $\Delta[1]$ is $1$-coskeletal, the simplicial map $q_{\cD}\circ\tau\colon \partial\Delta[n]\to\mat{\cD} \to \mat{[0]}\cong \Delta[1]$
extends uniquely to a simplicial map $\Delta[n]\to\mat{\cD} \to \mat{[0]}\cong \Delta[1]$, which by \cref{simplicesDelta1} must be of the form $\chi_k\colon\Delta[n]\to\Delta[1]$ for a unique $k$ with $-1\leq k\leq n$.
The fact that $q_{\cD}$ is compatible with the face structure maps implies that
\[q_{\cD}(\tau_i)=d_{i}(\chi_k)=\left\{
\begin{array}{cc}
    \chi_{k-1} & \text{if $0\leq i \leq k$} \\
   \chi_k & \text{if $k+1\leq i \leq n$,}
\end{array}\right.\]
and this relation determines the size of each matrix $\tau_i$, namely
\[\tau_i\text{ is of the form}\left\{
\begin{array}{cc}
   [k-1]\times [(n-2)-(k-1)]^{\op} \to \cD  & \text{if $0\leq i \leq k$} \\
   { [k]\times [(n-2)-k]^{\op} \to \cD }& \text{if $k+1\leq i \leq n$.}
\end{array}\right.
\]
We now construct a matrix of the form $\widetilde{\tau}\colon [k]\times [l]^{\op}\to \cD$ representing an $n$-simplex of $\mat\cD$ with boundary $\tau$. Note that if there exists an $n$-simplex $\widetilde\tau$ of $\mat\cD$ with boundary $\tau$, it must be of this form because $\partial(q_{\cD}\widetilde{\tau})=\partial(\chi_k)$.
 
 If $k=-1$ or $k=n$, we set $\widetilde\tau$ to be the empty row $!\colon[-1]\times[n]\to\cD$, or empty column $!\colon[n]\times[-1]\to\cD$ of length $n$, respectively. Note that these are the only simplices $\widetilde\tau$ with boundary $\tau$ and for which $k=-1$ or $k=n$. We can then assume $0\le k\le n-1$, and recall that $k+l=n-1\geq 3$.

We define $\widetilde\tau\colon[k]\times[l]^{\op}\to\cD$ on an object $(a,b)$ of $[k]\times [l]^{\op}$ 
as
\[
\widetilde\tau(a,b):=\tau_i(a',b'),
\]
where $0\leq i \leq k+1+l$ and $(a',b')$ are such that
\[(a,b)=\left\{
\begin{array}{ll}
  (d^ia',b')   &  \mbox{ if }0\leq i \leq k\text{ and }(a',b')\in[k-1]\times[l]^{\op}\\
   (a', d^{i-(k+1)}b')  & \mbox{ if }k+1\leq i \leq k+1+l \text{ and }(a',b')\in[k]\times[l-1]^{\op}.
\end{array}\right.\]
Note that if there exists a simplex $\widetilde\tau$ with boundary $\tau$, the value of objects must satisfy this condition.
To verify that the assignment is well-defined, one can use
\begin{itemize}[leftmargin=*]
    \item the fact that $k+l\ge3$ to see that such $i$ and $(a',b')$ always exist; roughly speaking, $i$ can be taken to be such that the $i$-th row or the $i-(k+1)$-th column of $[k]\times[l]^{\op}$ do not contain $(a,b)$;
    \item the fact that each coface $d^i\colon[k-1]\to[k]$ and each coface $d^{i-(k+1)}\colon[l-1]^{\op}\to[l]^{\op}$ is a monomorphism to see that $(a',b')$ is unique once $i$ is chosen; and
    \item the relations $d_i\tau_j=d_{j-1}\tau_i$ for $0\leq i < j \leq n$ and the simplicial identities for $\mat\cD$ to see that different choices of $i$ produce the same result.
\end{itemize}

Similarly, we define $\tau\colon[k]\times[l]^{\op}\to\cD$ on a morphism $(f,g)$ of $[k]\times [l]^{\op}$
as
\[
\tau(f,g):=\tau_i(f',g').
\]
where $0\leq i \leq k+1+l$ and $(f,'g')$ are such that
\[(f,g)=\left\{
\begin{array}{ll}
  (d^if',g')   &  \mbox{ if }0\leq i \leq k\text{ and }(f',g')\text{ is in }[k-1]\times[l]^{\op}\\
   (f', d^{i-(k+1)}g')  & \mbox{ if }k+1\leq i \leq k+1+l\text{ and }(f',g')\text{ is in }[k]\times[l-1]^{\op}.
\end{array}\right.\]
Again, one can verify that the assignment is well-defined. Note that if there exists a simplex $\widetilde\tau$ with boundary $\tau$, the value on morphisms must satisfy this condition.
Moreover, the assignment is by construction compatible with source, target and identities, and we now argue that it is compatible with composition.

To this end, we first prove that $\widetilde{\tau}$ is compatible with composition of morphisms $(a,b)\to (a',b')\to(a'',b'')$ in $[k]\times[l]^{\op}$
that involve at most four indices, namely such that $\{a,b,a',b',a'',b''\}$ has at most four elements. In this case, the pair of composable morphisms must be one of the following
{\small
\[
\begin{tikzcd}[column sep=0.4cm]
(a,b)\arrow[d]\\(a',b)\arrow[d]\\(a'',b)
\end{tikzcd}
,\quad
\begin{tikzcd}[column sep=0.4cm]
(a,b)\arrow[r] & (a,b') \ar[d] \\
& (a',b')
\end{tikzcd}
,\quad
\begin{tikzcd}[column sep=0.4cm]
(a,b)\arrow[d] & \\
 (a,b') \ar[r]& (a',b')
\end{tikzcd}
,\quad
\begin{tikzcd}[column sep=0.4cm]
(a,b) \arrow[r] & (a,b') \arrow[r] & (a,b'').
\end{tikzcd}
\]

}
In particular, there exists $0\leq i \leq k+1+l$ such that $(f,g)$, $(f',g')$ and their composite are in the image of 
$d^i\times\id $ if $0\leq i \leq k$ or in the image of
   $\id\times d^{i-(k+1)}$ if $k+1\leq i \leq k+1+l$. Roughly speaking, $i$ can be taken to be such that the $i$-th row or the $i-(k+1)$-th column of  $[k]\times[l]^{\op}$ do not contain the sources or targets of $(f,g)$, $(f',g')$ and their composite. In particular we can use $\tau_i$ to determine the value of $\widetilde{\tau}$ on these morphisms, and use that $\tau_i$ is a functor by assumption.
In particular, we conclude that $\widetilde\tau$ is compatible with this special kind of composition in $[k]\times[l]^{\op}$.
 
Now given an arbitrary pair of composable morphisms 
$(a,b)\to (a',b')\to(a'',b'')$ in $[k]\times[l]^{\op}$, we can prove that $\widetilde\tau$ is compatible with this composition using the previous consideration. Indeed, we have that:
{\scriptsize
\[
\begin{tikzcd}[column sep=0.4cm]
\widetilde{\tau}(a,b)\arrow[rd]& & \\
 & \widetilde{\tau}(a',b')\arrow[rd]&\\
 & & \widetilde{\tau}(a'',b'')
\end{tikzcd}
=
\begin{tikzcd}[column sep=0.4cm]
\widetilde{\tau}(a,b)\arrow[r]\ar[rd, dotted]& \widetilde{\tau}(a,b')\arrow[d]&\\
 & \widetilde{\tau}(a',b')\ar[rd, dotted] \arrow[r]& \widetilde{\tau}(a',b'')\ar[d]\\
 & & \widetilde{\tau}(a'',b'')
\end{tikzcd} 
\]
\[
=
\begin{tikzcd}[column sep=0.4cm]
\widetilde{\tau}(a,b)\arrow[r]& \widetilde{\tau}(a,b')\arrow[r] \ar[d, dotted]& \widetilde{\tau}(a,b'')\arrow[d]\\
 & \widetilde{\tau}(a',b')\arrow[r,dotted]& \widetilde{\tau}(a',b'')\ar[d]\\
 & & \widetilde{\tau}(a'',b'')
\end{tikzcd} 
=
\begin{tikzcd}[column sep=0.4cm]
\widetilde{\tau}(a,b)\arrow[rr, bend left ]\arrow[r,dotted]&\widetilde{\tau}(a,b')\arrow[r,dotted] & \widetilde{\tau}(a,b'') \arrow[dd, bend left=30]\arrow[d,dotted]\\
 & & \widetilde{\tau}(a',b'')\arrow[d, dotted]\\
 & & \widetilde{\tau}(a'',b'')
\end{tikzcd} 
\]
\[
=
\begin{tikzcd}[column sep=0.4cm]
\widetilde{\tau}(a,b)\arrow[rrdd, bend right ]\arrow[rr, bend left, dotted ]& &\widetilde{\tau}(a,b'') \arrow[dd, bend left=30, dotted]\\
 & & \\
 & & \widetilde{\tau}(a'',b'').
\end{tikzcd} 
\]
}
In particular, $\widetilde{\tau}$ is compatible with arbitrary composition of morphisms in $[k]\times[l]^{\op}$ and defines indeed a functor.
\end{proof}

The referee pointed out the following conceptual way of understanding the simplicial set $\mat\cD$, its coskeletality, the functoriality in $\cD$, and the canonical map to $\Delta[1]$.

\begin{aside}
The assignment $\cD\mapsto\mat\cD$ can be understood as the composite functor
\[
{\small{
\begin{tikzcd}[column sep=0.45cm]
\cat\arrow[r, "N"] & \set^{\Delta^{\op}}\arrow[r, "{d_*}"] & 
\set^{\Delta^{\op}\times\Delta^{\op}}\arrow[r,"{(\id\times\op)^*}"] &[0.6cm] \set^{\Delta^{\op}\times\Delta^{\op}} \arrow[r, "{D^{-1}}"] & \mathbf{C}(\Delta[0],\Delta[0])\arrow[r, "U"] &\set^{\Delta^{\op}},
\end{tikzcd}
}
}
\]
in which:
\begin{itemize}[leftmargin=*]
    \item $d_*$ denotes right Kan extension along the diagonal functor $d\colon \Delta \to \Delta \times \Delta$;
    \item $(\id\times\op)^*$ denotes the involution induced by $\op$ in the second variable; 
    \item the category $\mathbf{C}(\Delta[0], \Delta[0])$ is (referring to Joyal's notation from \cite[\textsection 7]{JoyalVolumeII}) the full subcategory of the slice category $\set^{\Delta^{\op}}/_{\Delta[1]}$ consisting of those simplicial sets over $\Delta[1]$ whose two fibers are isomorphic to $\Delta[0]$;
    \item $D^{-1}$ denotes the functor (which is in fact an instance of the equivalence from \cite[Prop.~7.4]{JoyalVolumeII}) that sends a bisimplicial set $X$ to the simplicial set $D^{-1}(X)$ over $\Delta[1]$ whose set of $n$-simplices is the disjoint union
\[D^{-1}X=\{*\}\amalg\coprod_{k,l\ge0,k+l=n-1}X_{k,l}\amalg\{*\}\]
and the map to $\Delta[1]$ is given in the evident way by the indexing of this sum. 
\item $U$ denotes the forgetful functor to simplicial sets.
\end{itemize}
In order to see the coskeletality of $\mat\cD$, one could use Reedy category theory and show that a bisimplicial set $X$ is $p$-coskeletal (in the sense of \cite[\textsection3]{RVreedy}) if and only if $D^{-1}X$ is $(p+1)$-coskeletal, and that for any category $\cD$ the bisimplicial set $d_*N\cD$ is $2$-coskeletal.
\end{aside}

As announced in \cref{ThmA}, we now identify the Duskin nerve of the suspension $2$-category $\Sigma\cD$ with the simplicial set of $\cD$-valued matrices. Recall that both $N\Sigma\cD$ and $\mat\cD$ are canonically endowed with maps $p_{\cD}$ and $q_{\cD}$ to $\Delta[1]$, as observed in \cref{NSigmaDoverDelta[1],MatDoverDelta[1]}.

 \begin{thm}
 \label{simplicesasmatrices}
Let $\cD$ be a $1$-category. There is an isomorphism of simplicial sets over $\Delta[1]$
$$\Phi_{\cD}\colon \mat{\cD}\cong N(\Sigma{\cD}).$$
  In particular,
  any $n$-simplex of $N\Sigma\cD$ can be described uniquely as a matrix $[k]\times [l]^{\op} \to \cD$ together with $k,l\ge-1$ and $k+l=n-1$; moreover, there is a bijective correspondence between the $n$-simplices of $N\Sigma\cD$ of type $k$, and the matrices $[k]\times[n-1-k]^{\op}\to\cD$.
  \end{thm}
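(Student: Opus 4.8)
The plan is to exploit that both $\mat\cD$ and $N\Sigma\cD$ are $3$-coskeletal --- the former by \cref{matrixcoskeletal}, the latter by the definition of the Duskin nerve --- so that it suffices to construct an isomorphism of $3$-truncated simplicial sets $\tr_3\mat\cD\cong\tr_3 N\Sigma\cD$ and then apply $\cosk_3$; moreover, since $\Delta[1]$ is $1$-coskeletal, verifying that the resulting isomorphism lies over $\Delta[1]$, i.e.\ that $p_\cD\circ\Phi_\cD=q_\cD$, only requires a comparison in dimensions $0$ and $1$.

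First I would describe $\Phi_\cD$ in dimensions $n\le 3$, organized by the type invariant. Given a matrix $\sigma\colon[k]\times[l]^{\op}\to\cD$ with $k,l\ge 0$ and $k+l=n-1$, one builds the $n$-simplex $\Phi_\cD(\sigma)$ of $N\Sigma\cD$ of type $k$: its vertices are $x$ in positions $0,\dots,k$ and $y$ in positions $k+1,\dots,n$; its edge $\sigma(ij)$ is $\id_x$ when $j\le k$, is $\id_y$ when $i>k$, and is the entry $d_{i,\,j-k-1}$ of the matrix when $i\le k<j$ (an object of $\Map_{\Sigma\cD}(x,y)=\cD$); and its non-degenerate $2$-faces come from the morphisms of the matrix, namely on vertices $p<q<r$ with $q\le k<r$ the $2$-cell $\sigma(pr)\Rightarrow\sigma(qr)\circ\id_x$ is the vertical morphism from row $p$ to row $q$ in column $r-k-1$, and on vertices $p<q<r$ with $p\le k<q$ the $2$-cell $\sigma(pr)\Rightarrow\id_y\circ\sigma(pq)$ is the horizontal morphism from column $r-k-1$ to column $q-k-1$ in row $p$. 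The empty row of length $n$ is sent to the constant $n$-simplex at $y$, and the empty column of length $n$ to the constant $n$-simplex at $x$.

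The core of the proof is to check that in dimension $n=3$ this recipe is well defined --- i.e.\ that the $2$-cells just prescribed really do satisfy the pasting equation in the definition of a Duskin $3$-simplex of $\Sigma\cD$ --- and that $\Phi_\cD$ is bijective in that dimension. Both rest on the observation that in $\Sigma\cD$ almost all of the $2$-cells entering the pasting equation are identities: they live in $\Map_{\Sigma\cD}(x,x)=\Map_{\Sigma\cD}(y,y)=[0]$, or they are composites of the form $\id_y\circ(-)$ or $(-)\circ\id_x$. Hence the equation collapses, and a short case analysis on where $k$ falls among the four vertices shows that for $k=0$ and $k=2$ it becomes the assertion that the composite of the two composable morphisms of the matrix equals its ``long'' morphism --- so the type-$0$, resp.\ type-$2$, $3$-simplices correspond exactly to $1\times 3$, resp.\ $3\times 1$, matrices, i.e.\ pairs of composable morphisms of $\cD$ --- while for $k=1$ it becomes commutativity of the square of the matrix, so type-$1$ $3$-simplices correspond to $2\times 2$ matrices. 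Surjectivity is precisely this reading of the pasting equation; injectivity is clear, since the matrix is read back off from the edges and $2$-faces. This specialization of the pasting diagram of the definition is the main obstacle, though it is only bookkeeping once the identity $2$-cells have been isolated; dimensions $n\le 2$ are treated the same way and are easier.

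Finally I would check that $\Phi_\cD$ commutes with all face and degeneracy maps among dimensions $\le 3$: a face of an $n$-simplex of $N\Sigma\cD$ is its restriction to a sub-simplex, and omitting a vertex of index $\le k$ deletes the corresponding row of the matrix while omitting one of index $>k$ deletes the corresponding column, matching the formulas for $d_i$ on $\mat\cD$ from \cref{matrixsset}; similarly degeneracies correspond to doubling a row or a column. Thus $\tr_3\Phi_\cD$ is an isomorphism of $3$-truncated simplicial sets, so $\Phi_\cD=\cosk_3\tr_3\Phi_\cD$ is an isomorphism of simplicial sets, and comparing its values in dimensions $0$ and $1$ with \cref{NSigmaDoverDelta[1],MatDoverDelta[1]} gives $p_\cD\circ\Phi_\cD=q_\cD$. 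The ``in particular'' statements then follow immediately: every $n$-simplex of $N\Sigma\cD$ is $\Phi_\cD$ of a unique matrix $[k]\times[n-1-k]^{\op}\to\cD$, and since $\Phi_\cD$ lies over $\Delta[1]$ it carries the fiber $q_\cD^{-1}(\chi_k)$ --- which by \cref{MatDoverDelta[1]} is exactly the set of matrices with $k$ rows --- bijectively onto $p_\cD^{-1}(\chi_k)$, the $n$-simplices of $N\Sigma\cD$ of type $k$.
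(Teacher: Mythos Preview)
Your proposal is correct and follows essentially the same approach as the paper's proof: both exploit the $3$-coskeletality of $\mat\cD$ and $N\Sigma\cD$, construct the bijection explicitly in dimensions $\le 3$ by matching matrices of each shape with simplices of each type, verify compatibility with the simplicial structure, and then check the map lies over $\Delta[1]$. Your presentation is slightly more systematic --- you give a uniform formula for $\Phi_\cD$ on edges and $2$-faces rather than listing the cases, and you observe that the over-$\Delta[1]$ condition need only be checked in dimensions $0$ and $1$ since $\Delta[1]$ is $1$-coskeletal --- but the argument is the same.
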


\begin{proof}
We recall that the Duskin nerve of $\Sigma{\cD}$ is $3$-coskeletal, and we showed in \cref{matrixcoskeletal} that the set of $\cD$-valued matrices also assembles into a $3$-coskeletal simplicial set. Therefore, to construct the isomorphism $\Phi_{\cD}$ it is enough to identify the simplices of these two simplicial sets up to dimension $3$ compatibly with the simplicial structure.

We identify all simplices in dimension up to $3$ with $\cD$-valued matrices as follows.
\begin{enumerate}[leftmargin=*]
\item[(0)] Any of the two objects
$$x\quad\text{ and }\quad y$$
of $\Sigma{\cD}$ defines a $0$-simplex of the Duskin nerve of $\Sigma{\cD}$; we identify them respectively with the empty column $!\colon[0] \times [-1]^{\op}\to\cD$ and the empty row $!\colon[-1]\times [0]^{\op}\to\cD$ of length $0$. Similarly, for all $n=1,2,3$
any of the two objects $x$ and $y$ of $\Sigma{\cD}$ defines a unique degenerate $n$-simplex of the Duskin nerve; we identify them with the empty column $[n]\times [-1]^{\op}\to\cD$ and the empty row $[-1]\times [n]^{\op}\to\cD$ of length $n$,
respectively.
\item Any object $a$ in $\cD$ gives rise to a non-degenerate $1$-simplex in the Duskin nerve:
  \[
\begin{tikzcd}
    x \arrow[rr, "a"{below}]&& y
\end{tikzcd}
\]
    and all non-degenerate $1$-simplices of the Duskin nerve of $\Sigma{\cD}$ can uniquely be written in this form for some $a$ in $\cD$.
    We identify this $1$-simplex with the functor $[0]\times [0]^{\op}\to \cD$ with image
    \[a.\]
    \item Any morphism $\varphi\colon a\to b$ in $\cD$ gives rise to two $2$-simplices in the Duskin nerve of $\Sigma{\cD}$:
\[
\begin{tikzcd}[baseline=(current  bounding  box.center)]
 & x \arrow[rd, "b"] & \\
    x \arrow[ru, "{s_0x}"]
  \arrow[rr, "a"{below}, ""{name=D,inner sep=1pt}]
  && y
  \arrow[Rightarrow, from=D, to=1-2, shorten >= 0.1cm, shorten <= 0.1cm, "\varphi"] 
\end{tikzcd}
\mbox{ and }
\begin{tikzcd}[baseline=(current  bounding  box.center)]
 & y\arrow[rd, "{s_0y}"]  & \\
    x \arrow[ru, "b"]
  \arrow[rr, "a"{below}, ""{name=T,inner sep=1pt}]
  && y,
  \arrow[Rightarrow, from=T, to=1-2, shorten >= 0.1cm, shorten <= 0.1cm, "\varphi"]
\end{tikzcd}
\]
Moreover, all non-maximally-degenerate $2$-simplices of the Duskin nerve of $\Sigma{\cD}$ can uniquely be written in one of these two forms for some $\varphi\colon a\to b$ in $\cD$.
We identify these $2$-simplices with the functors $[1]\times [0]^{\op}\to\cD$ and $[0]\times [1]^{\op}\to\cD$
with image
\[ \begin{tikzcd}
a \arrow[d,"\varphi"] \\
 b 
\end{tikzcd}
\quad\quad
\text{and}
\quad\quad
\begin{tikzcd}
a\arrow[r,"\varphi"] & b.
\end{tikzcd}
\]
\item Any commutative square
\[\begin{tikzcd}
a \arrow[d,"\psi" swap]\arrow[r,"\varphi"]&b\arrow[d,"\gamma"] \\
 f \arrow[r,"\theta" swap]&c\\
\end{tikzcd}\]
in $\cD$ gives rise to three $3$-simplices in the Duskin nerve of $\Sigma\cD$:
\[
\simpfverArrow{\id_x}{\id_x}{c}{a}{\id_x}{f}{\theta\psi}{\id_x}{\psi}\simpfverArrowcontinued{\theta}{x}{x}{x}{y},
\]
\[
\simpfverArrow{\id_x}{c}{\id_y}{a}{b}{f}{\varphi}{\gamma}{\psi}\simpfverArrowcontinued{\theta}{x}{x}{y}{y},
\]
\[
\simpfverArrow{c}{\id_y}{\id_y}{a}{b}{\id_y}{\varphi}{\gamma}{\gamma\varphi}\simpfverArrowcontinued{\id_y}{x}{y}{y}{y}.
\]
Moreover, all non-maximally-degenerate $3$-simplices of the Duskin nerve can uniquely be written in one of these three forms for some commutative square in $\cD$ as above.
We identify these $3$-simplices with the functors $[2]\times [0]^{\op} \to \cD$, $[1]\times [1]^{\op} \to \cD$ and $[0]\times [2]^{\op} \to \cD$ displayed as
\[
\begin{tikzcd}
a \arrow[d,"\psi"] \\
 f \arrow[d,"\theta"] \\
 c
\end{tikzcd}
\quad\quad
\text{and}
\quad\quad
\begin{tikzcd}
a \arrow[d,"\psi" swap]\arrow[r,"\varphi"]&b\arrow[d,"\gamma"] \\
 f \arrow[r,"\theta" swap]&c\\
\end{tikzcd}
\quad\quad
\text{and}
\quad\quad
\begin{tikzcd}
a\arrow[r,"\varphi"] &b\arrow[r,"\gamma"]& c.
\end{tikzcd}
\]
    \end{enumerate}
    The given identification between simplices of the Duskin nerve in dimension up to $3$ and $\cD$-valued matrices can be checked to be compatible with the simplicial structure, using the explicit formulas from \cref{matrixcoskeletal}, and defines the desired isomorphism of simplicial sets $\Phi_{\cD}\colon\mat\cD\to N\Sigma\cD$.

Finally, we observe that there is a commutative diagram
\[\begin{tikzcd}
 \mat\cD\arrow[r,"\Phi_{\cD}"]\arrow[d,"p_{\cD}",swap]&N\Sigma\cD\arrow[d,"q_{\cD}"]\\
\mat[0]\arrow[r, "\Phi" swap]&\Delta[1]= N(\Sigma[0]), 
\end{tikzcd}\]
 which expresses the desired compatibility of $\Phi_{\cD}$ with the maps $p_{\cD}$ and $q_{\cD}$ to $\Delta[1]$.
The fact that the diagram commutes can be checked by means of the explicit formulas provided in \cref{NSigmaDoverDelta[1],MatDoverDelta[1]} on $n$-simplices for $n=0,1,2,3$, and deduced using a coskeletality argument for simplices in dimension higher than $3$.
\end{proof}

\section{The Duskin nerve of the free $2$-cell}
\label{section2}
As an instance of \cref{ThmA}, we obtain a full description of the non-degenerate simplices of the Duskin nerve of the free $2$-cell
\[
\begin{tikzcd}[row sep=3cm, column sep=2cm]
  x \arrow[r, bend left, "f_0", ""{name=U,inner sep=2pt,below}]
  \arrow[r, bend right, "f_1"{below}, ""{name=D,inner sep=2pt}]
  & y,
  \arrow[Rightarrow, from=U, to=D, "\alpha"]
\end{tikzcd}
\]
it being the suspension of the $1$-category $[1]$.

\begin{prop}
 \label{intrinsecdescription}
In dimension $n$, the Duskin nerve of the free $2$-cell has precisely two non-degenerate simplices $\sigma_{n}$ and $\sigma_n'$. More precisely, $\sigma_0:=y$ and $\sigma_0':=x$ are the two $0$-simplices of the Duskin nerve, $\sigma_1:=f_1\colon x\to y$ and $\sigma_1':=f_0\colon x\to y$ are the two $1$-simplices of the Duskin nerve, and for $n>1$ the $n$-simplices $\sigma_n$ and $\sigma_n'$ are described as follows.
\begin{itemize}[leftmargin=*]
    \item if $n=2m$, the non-degenerate $2m$-simplices $\sigma_{2m}$ and $\sigma_{2m}'$ are uniquely determined by the relations
     $$ d_i\sigma_{2m}=\left\{
     \begin{array}{lr}s_{m-1+i}\sigma_{2m-2}&\mbox{ for }0 \leq i \leq m-1 \\
    \sigma'_{2m-1}&\mbox{ for }i=m\\
    s_{i-m-1}\sigma_{2m-2}&\mbox{ for }m+1\leq i\leq 2m-1\\
    \sigma_{2m-1}&\mbox{ for }i=2m
    \end{array}\right.$$
    $$d_i\sigma_{2m}'=\left\{
    \begin{array}{lr}
          \sigma_{2m-1}&\mbox{ for }i=0\\
          s_{m-1+i}\sigma_{2m-2}'&\mbox{ for }1 \leq i \leq m-1 \\
    \sigma_{2m-1}'&\mbox{ for }i=m\\
    s_{i-m-1}\sigma_{2m-2}'&\mbox{ for }m+1\leq i\leq 2m;
    \end{array}
    \right.$$
    \item if $n=2m+1$, the non-degenerate $(2m+1)$-simplices $\sigma_{2m+1}$ and $\sigma_{2m+1}'$ are uniquely determined by the relations
    $$d_i\sigma_{2m+1}=\left\{\begin{array}{lr} s_{m+i}\sigma_{2m-1},&\mbox{ for }0 \leq i \leq m-1\\
    \sigma_{2m},&\mbox{ for }i=m\\
      \sigma_{2m}',&\mbox{ for }i=m+1\\
    s_{i-m-2}\sigma_{2m-1},&\mbox{ for }m+2\leq i\leq 2m+1
    \end{array}
    \right.
    $$
    $$
    d_i\sigma_{2m+1}'=\left\{\begin{array}{lr} \sigma_{2m},&\mbox{ for }i=0\\
    s_{m-1+i}\sigma_{2m-1}',&\mbox{ for }1 \leq i \leq m \\
    s_{i-m-1}\sigma_{2m-1}',&\mbox{ for }m+1\leq i\leq 2m\\
    \sigma_{2m}',&\mbox{ for }i=2m+1. 
    \end{array}
    \right.
    $$
\end{itemize}
\end{prop}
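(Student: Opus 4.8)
The plan is to transport the question across the isomorphism $\Phi_{[1]}\colon\mat{[1]}\cong N(\Sigma[1])$ of \cref{simplicesasmatrices} and to classify the non-degenerate $[1]$-valued matrices directly. For $k,l\ge0$ a functor $\sigma\colon[k]\times[l]^{\op}\to[1]$ is order-preserving and hence determined by its ``threshold'' function $r_\sigma\colon\{0,\dots,l\}\to\{0,\dots,k+1\}$, where $r_\sigma(b)$ is the least row index $a$ with $\sigma(a,b)=1$ (and $r_\sigma(b)=k+1$ if the $b$-th column is constant at $0$); monotonicity of $\sigma$ in both variables is equivalent to $r_\sigma$ being weakly increasing, so this identifies the shape-$(k,l)$ matrices with $\Hom_\Delta([l],[k+1])$. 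Using that the non-degenerate matrices are exactly those with no two consecutive rows and no two consecutive columns equal, I would note that the $b$-th and $(b+1)$-st columns of $\sigma$ coincide iff $r_\sigma(b)=r_\sigma(b+1)$, while the $a$-th and $(a+1)$-st rows coincide iff $a+1\notin\im r_\sigma$; hence $\sigma$ is non-degenerate iff $r_\sigma$ is strictly increasing and $\{1,\dots,k\}\subseteq\im r_\sigma$.

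A strictly increasing $r_\sigma$ whose image contains $\{1,\dots,k\}$ can contain only $0$ and/or $k+1$ besides, so $l+1=|\im r_\sigma|\in\{k,k+1,k+2\}$, i.e.\ $l\in\{k-1,k,k+1\}$; combined with $n=k+l+1$ this pins down, for each $n\ge1$, exactly two non-degenerate matrices --- for $n=2m$, the shape-$(m-1,m)$ matrix with $r_\sigma=\id_{[m]}$ and the shape-$(m,m-1)$ matrix with $r_\sigma(b)=b+1$; for $n=2m+1$, the two shape-$(m,m)$ matrices with $r_\sigma(b)=b$ and with $r_\sigma(b)=b+1$ --- while for $n\ge1$ every matrix with $k=-1$ or $l=-1$ is an iterated degeneracy of $y$ or $x$. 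This already proves the count of non-degenerate simplices. I would then set $\sigma_n$ to be the matrix whose threshold function attains the value $0$ (equivalently, with $\sigma_n(0,0)=1$) and $\sigma_n'$ the other one, and check using the explicit low-dimensional form of $\Phi_{[1]}$ from \cref{simplicesasmatrices} (together with the identification of the generating $2$-cell $\alpha\colon f_0\Rightarrow f_1$ with the nontrivial morphism $0\to1$ of $[1]$) that $\sigma_0=y$, $\sigma_0'=x$, $\sigma_1=f_1$, $\sigma_1'=f_0$, matching the statement.

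It remains to verify the four recursions. For this I would translate the face and degeneracy maps of \cref{matrixsset} into operations on the threshold function: for $0\le i\le k$, $d_i$ deletes row $i$ and post-composes $r_\sigma$ with the codegeneracy $s^i\colon[k+1]\to[k]$; for $k+1\le i\le n$, $d_i$ deletes the column $i-k-1$ and pre-composes $r_\sigma$ with the coface $d^{\,i-k-1}$, i.e.\ drops the $(i-k-1)$-st term of the increasing sequence $r_\sigma$; dually, $s_i$ either post-composes $r_\sigma$ with a coface or repeats one term. Feeding $\sigma_n$ and $\sigma_n'$ --- whose threshold functions are identities or single cofaces --- into these rules, each $d_i$ yields, in the column range, either the coface skipping exactly $0$ or the top element (which one recognizes as $\sigma_{n-1}$ or $\sigma_{n-1}'$) or else a sequence with one repeated value, i.e.\ a degeneracy, which one rewrites uniquely as $s_j\sigma_{n-2}$ or $s_j\sigma_{n-2}'$ by deleting the repetition and reading off $j$; matching the index $j$ in each of the four cases (even/odd, primed/unprimed) reproduces the displayed formulas. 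Uniqueness then needs no separate argument: we have exhibited all non-degenerate simplices, and the two in a given dimension already differ in, say, their $0$-th face. The only laborious point is this last translation --- especially keeping straight the $[l]$ versus $[l]^{\op}$ orientation of the columns and the shift by $k+1$ in the face indices --- but it is entirely mechanical, and I expect no conceptual obstacle.
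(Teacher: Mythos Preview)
Your proposal is correct and follows essentially the same route as the paper: both transport the problem to $\mat{[1]}$ via \cref{simplicesasmatrices}, classify the non-degenerate matrices by bounding the shape (the paper's ``at most $l+2$ distinct rows'' count is your threshold-function injectivity/surjectivity condition in different clothing), write down the same explicit matrices $\sigma_n,\sigma_n'$, and then verify the face relations. Your threshold-function bookkeeping makes the face computation more explicit than the paper's terse ``an induction argument using the simplicial identities,'' but the overall strategy is identical.
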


\begin{proof}
By \cref{simplicesasmatrices} we know that the Duskin nerve of the free $2$-cell has precisely two $0$-simplices and that the $n$-simplices that are not maximally degenerate can be enumerated by means of functors $[k]\times [l]^{\op}\to[1]$, with $k,l\ge0$ and $k+l=n-1$.
Moreover, an $n$-simplex is non-degenerate if and only if all rows are different and all columns are different. If $\sigma\colon[k]\times[l]^{\op}\to [1]$ is a non-degenerate simplex, it must be that $k+1\leq l+2$, given that each row is a functor $[l]^{\op}\to [1]$ and there are at most $l+2$ different ones, and similarly it must be that $l+1\leq k+2$, arguing with columns instead of rows.
Since we have $k+l=n-1$, we obtain that $n-2\leq 2k \leq n$.

According to this analysis,
in dimension $n$ the Duskin nerve of the free $2$-cells has precisely two non-degenerate simplices $\sigma_{n}$ and $\sigma_n'$.
\begin{itemize}[leftmargin=*] 
    \item If $n=2m$, the non-degenerate simplices $\sigma_{2m}$ and $\sigma'_{2m}$correspond to the
     functors
     \[\sigma_{2m}\colon[m-1]\times [m]^{\op}\to[1]\text{ and }\sigma_{2m}'\colon[m]\times [m-1]^{\op}\to[1]\]
given on objects by
\[
\sigma_{2m}(i,j)=\begin{cases}
0, \mbox{ if }i<j,\\
1, \mbox{ else.}
\end{cases}
\text{ and }\quad
\sigma_{2m}'(i,j)=\begin{cases}
0, \mbox{ if }i\leq j,\\
1, \mbox{ else.}
\end{cases}
\]
    \item If $n=2m+1$, the non-degenerate simplices $\sigma_{2m+1}$ and $\sigma'_{2m+1}$ correspond to the
     functors
     \[\sigma_{2m+1}\colon[m]\times [m]^{\op}\to[1]\text{ and }\sigma'_{2m+1}\colon[m]\times [m]^{\op}\to[1]\]
     given on objects by 
      \[
\sigma_{2m+1}(i,j)=\begin{cases}
0, \mbox{ if }i<j,\\
1, \mbox{ else.}
\end{cases}
\text{ and }\quad
\sigma_{2m+1}'(i,j)=\begin{cases}
0, \mbox{ if }i\leq j,\\
1, \mbox{ else.}
\end{cases}
\]
\end{itemize}
These matrices can be depicted as follows.
\[{
\small \begin{tikzcd}[row sep=small, column sep=small]
0 \arrow{d} \arrow{r} & 0 \arrow{d} \arrow{r} &  \cdots \arrow{r}&0\arrow{d}\arrow{r}& 0\arrow{d}\arrow{r}   & 1 \arrow{d}\\
  0 \arrow{d}\arrow{r} & 0 \arrow{d} \arrow{r} &  \cdots \arrow{r}&0\arrow{r} \arrow{d}& 1 \arrow{r} \arrow{d} & 1 \arrow{d}\\
    0 \arrow{d}\arrow{r} & 0 \arrow{d} \arrow{r} &  \cdots \arrow{r}& 1\arrow{r}\arrow{d}& 1\arrow{r}\arrow{d}  & 1 \arrow{d}\\
\vdots \arrow{d}& \vdots \arrow{d}  & \ddots & \vdots\arrow{d} &\vdots\arrow{d}&\vdots\arrow{d} \\
0\arrow{r} \arrow{d} &1 \arrow{d}\arrow{r}  & \cdots   \arrow{r} &1\arrow{r}\arrow{d}&1\arrow{r}\arrow{d}& 1\arrow{d}\\
1\arrow{r}  &1 \arrow{r}  & \cdots   \arrow{r} &1\arrow{r}&1\arrow{r}& 1
\end{tikzcd}
}
\mbox{ and }
{
\small \begin{tikzcd}[row sep=small, column sep=small]
0 \arrow{d} \arrow{r} & 0 \arrow{d} \arrow{r} &  \cdots \arrow{r}&0\arrow{d}\arrow{r}& 0\arrow{d}\arrow{r}   & 0 \arrow{d}\\
  0 \arrow{d}\arrow{r} & 0 \arrow{d} \arrow{r} &  \cdots \arrow{r}&0\arrow{r} \arrow{d}& 0 \arrow{r} \arrow{d} & 1 \arrow{d}\\
    0 \arrow{d}\arrow{r} & 0 \arrow{d} \arrow{r} &  \cdots \arrow{r}& 0\arrow{r}\arrow{d}& 1\arrow{r}\arrow{d}  & 1 \arrow{d}\\
\vdots \arrow{d}& \vdots \arrow{d}  & \ddots & \vdots\arrow{d} &\vdots\arrow{d}&\vdots\arrow{d} \\
0\arrow{r} \arrow{d} &0 \arrow{d}\arrow{r}  & \cdots   \arrow{r} &1\arrow{r}\arrow{d}&1\arrow{r}\arrow{d}& 1\arrow{d}\\
0\arrow{r}  &1 \arrow{r}  & \cdots   \arrow{r} &1\arrow{r}&1\arrow{r}& 1
\end{tikzcd}
}
\] 
In particular, in both matrices $\sigma$ and $\sigma'$ no two rows or columns are equal, and each row and column is increasing. 
 An induction argument using the simplicial identities shows that these simplices satisfy the desired relations.
The uniqueness of simplices satisfying those relations can be checked directly for simplices in dimension $1,2,3$ and follows from the $3$-coskeletality of $N\Sigma[1]$ (even $2$-coskeletality since $[1]$ is a poset) for simplices in dimension at least $4$.
\end{proof}

\begin{rmk}
As the Duskin nerve $N\Sigma[1]$ of the free $2$-cell is $2$-coskeletal, the simplices $\sigma_n$ and $\sigma_n'$ could be alternatively described in terms of their $2$-skeleta. For example, the $2$-dimensional faces of $\sigma_{2m}$ are identified by the following formula:
\[
\sigma_{2m}|_{\{ijk\}}=\left\{ \begin{array}{ll}
s_0^2\sigma_0'& \mbox{ if }  i<j<k \leq m-1,\\
s_0\sigma_1' & \mbox{ if } i<j<k-m,\\
\sigma_2' & \mbox{ if }  i<k-m\leq j\leq m-1,\\
s_0\sigma_1 &\mbox{ if } 0\leq k-m\leq i<j \leq m-1,\\
s_1\sigma_1' &\mbox{ if } i<j-m<k-m,\\
\sigma_2 & \mbox{ if }  0\leq j-m\leq i <k-m,\\
s_1\sigma_1 & \mbox{ if }0\leq j-m<k-m\leq i\leq m-1,\\
s_0^2\sigma_0& \mbox{ if } m-1<i<j<k.
\end{array}
\right.
\]
 \end{rmk}

\section{The Duskin nerve of $(r+1)$-point suspension $2$-categories}
\label{section3}

As announced informally in \cref{ThmB}, we now describe the Duskin nerve of $(r+1)$-point suspension $2$-categories $\Sigma[\cD_1,\dots,\cD_r]$, of which an important class of examples is given by all elements of Joyal's cell category $\Theta_2$. This description was inspired by the argument used in \cite[Prop.4.9]{rezkTheta}.

\begin{defn}
\label{suspension}
The \emph{$(r+1)$-point suspension} of given $1$-categories $\cD_1,\dots\cD_r$ is the $2$-category $\Sigma[\cD_1,\dots,\cD_r]$
with $r+1$ objects $x_0,\dots,x_r$ and hom categories given by
\[\Map_{\Sigma[ \cD_{1},\dots, \cD_{r}]}(x_{i},x_{j})\cong\left\{
\begin{array}{ccl}
{\cD_{i+1}} \times \ldots \times{\cD_{j}}&\text{ if }i<j\\
{[0]}&\text{ if }i=j\\
{[-1]}&\text{ if }i>j.
\end{array}
\right.\]
The only non-trivial compositions are given by the canonical isomorphisms
\[
\begin{tikzcd}[row sep=0.4cm]
 \Map_{\Sigma[ \cD_{1},\dots, \cD_{r}]}(x_{i},x_{j}) \times \Map_{\Sigma[ \cD_{1},\dots, \cD_{r}]}(x_{j},x_{k}) \arrow[r] \arrow[d, equals]&\Map_{\Sigma[ \cD_{1},\dots, \cD_{r}]}(x_{i},x_{k})\arrow[d,equals]\\
 \left({\cD_{i+1}} \times \ldots \times{\cD_{j}}\right) \times \left({\cD_{j+1}} \times \ldots \times{\cD_{k}}\right) \arrow[r, "\cong"] & {\cD_{i+1}} \times \ldots \times{\cD_{k}}.
\end{tikzcd}
\]
\end{defn}

\begin{rmk}
\label{rmk2}
Given $r\ge1$, we consider the
map of categories
\[s_r:=(\chi_0, \ldots, \chi_{r-1})\colon[r] \hookrightarrow [1]^r\]
given on objects by
\[s_r\colon i\mapsto(\underbrace{1,\dots,1}_{\text{$i$ times}},\underbrace{0,\dots,0}_{\text{$(r-i)$ times}}\!\!).\]
This map is injective on objects and fully faithful.
When taking nerves, the induced simplicial map
\[Ns_r\colon\Delta[r] \hookrightarrow \Delta[1]^r\]
is a monomorphism, and
an $n$-simplex $(\chi_{k_1},\dots,\chi_{k_r})\colon\Delta[n]\to\Delta[1]^r$ of $\Delta[1]^r$ is in the image of 
$Ns_r$ if and only if $k_1\le\dots\le k_r$. 
\end{rmk}

\begin{rmk}
\label{rmk1}
Given categories $\cD_1,\dots,\cD_r$, there are canonical maps of $2$-categories defined on $\Sigma[\cD_1,\dots,\cD_r]$.
\begin{enumerate}[leftmargin=*]
    \item For any $1\le i\le r$ there are canonical maps of $2$-categories
\[\Sigma[\cD_1,\dots,\cD_r]\to\Sigma[\cD_i],\]
which are induced by collapsing all $2$-categories $\Sigma\cD_j$ for $j<i$ to the point $x_{i-1}$ and all $2$-categories $\Sigma\cD_j$ for $j> i$ to the point $x_{i}$.
It in turns induces a map of simplicial sets
\[\mu_i\colon N(\Sigma[\cD_1,\dots,\cD_r])\to N(\Sigma[\cD_i]).\]
\item  There is a canonical map of $2$-categories
\[\Sigma[\cD_1,\dots,\cD_r]\to\Sigma[[0],\dots,[0]]\cong[r],\]
induced by collapsing to the terminal category each non-empty mapping category between two objects,
which in turns induces a map of simplicial sets
\[p_{\cD_1,\dots,\cD_r}\colon N\Sigma[\cD_1,\dots,\cD_r]\to \Delta[r],\]
 \end{enumerate}
\end{rmk}

\begin{prop}
\label{MultisuspensionPullback}
Given categories $\cD_1,\dots,\cD_r$, there is a pullback square of simplicial sets
\[
\begin{tikzcd}
N\Sigma[\cD_1,\dots,\cD_r] \arrow[rr,"{(\mu_1,\dots,\mu_r)}"]\arrow[d,swap,"p_{\cD_1,\dots,\cD_r}"] && N\Sigma[\cD_1] \times \ldots \times N\Sigma[\cD_r]\arrow[d,"p_{\cD_1}\times\dots\times p_{\cD_r}"]\\
 {\Delta[r]} \arrow[rr, "Ns_r",swap] && {\Delta[1]}\times\dots\times {\Delta[1]}.
\end{tikzcd}
\]
\end{prop}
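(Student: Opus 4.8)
The plan is to realise the square in the statement as the image under the Duskin nerve of a genuine pullback square of $2$-categories. For $1\le i\le r$ write $p_i\colon\Sigma\cD_i\to[1]$ for the $2$-functor induced by $\cD_i\to[0]$, so that $Np_i=p_{\cD_i}$, and recall from \cref{rmk1} the $2$-functors $\Sigma[\cD_1,\dots,\cD_r]\to\Sigma\cD_i$ and $\Sigma[\cD_1,\dots,\cD_r]\to[r]$ inducing $\mu_i$ and $p_{\cD_1,\dots,\cD_r}$. The first step is to check that these $2$-functors exhibit an isomorphism of $2$-categories
\[\Sigma[\cD_1,\dots,\cD_r]\ \cong\ \bigl(\Sigma\cD_1\times\dots\times\Sigma\cD_r\bigr)\underset{[1]^r}{\times}[r],\]
where the maps defining the right-hand pullback are $p_1\times\dots\times p_r$ and $s_r$. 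This is a direct inspection of objects, hom-categories and composition: the right-hand $2$-category has object set $\{0,\dots,r\}$ because $s_r$ is injective on objects with image the chains of $[1]^r$ (see \cref{rmk2}); its hom-category from $x_a$ to $x_b$ with $a<b$ is $\cD_{a+1}\times\dots\times\cD_b$, because the factor $\Sigma\cD_i$ contributes $\Map_{\Sigma\cD_i}(x,y)=\cD_i$ exactly when $a<i\le b$ and the terminal category otherwise; and the two composition laws agree since composition in a pullback of $2$-categories is computed componentwise. With this identification the commutativity of the square in the statement is automatic.

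The second step is to observe that the Duskin nerve sends this pullback of $2$-categories to a pullback of simplicial sets; more generally it takes any pullback $\cC=\cA\times_\cE\cB$ of $2$-categories to a pullback. Since $N\cD$ is $3$-coskeletal for every $2$-category $\cD$, and both the truncation $\tr_3$ and $\cosk_3$ preserve limits (the latter being a right adjoint), it suffices to see that $\tr_3 N\cC$ is the pullback of $\tr_3 N\cA$ and $\tr_3 N\cB$ over $\tr_3 N\cE$; equivalently, that for $n\le 3$ an $n$-simplex of $N\cC$ is the same as a compatible pair of $n$-simplices of $N\cA$ and $N\cB$. This is visible from the explicit description of simplices in low degrees: an object, a $1$-morphism, a $2$-cell of the form $c\Rightarrow b\circ a$, and a coherent quadruple of $2$-cells are all ``componentwise'' data for a $2$-categorical pullback, since every object, $1$-morphism and $2$-cell of $\cA\times_\cE\cB$ is a pair of those in $\cA$ and $\cB$ with matching image in $\cE$, and composition and whiskering are performed componentwise; in particular the coherence relation defining a $3$-simplex holds in $\cA\times_\cE\cB$ if and only if it holds in both $\cA$ and $\cB$. (One could also invoke Duskin's identification of $N\cC_n$ with the set of normal lax functors $[n]\to\cC$, for which the componentwise description is transparent, and then reorganise the data of such a functor according to which of the categories $\cD_i$ is active between consecutive vertices.)

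Combining the two steps, and using that $N[r]=\Delta[r]$, that $N([1]^r)=\Delta[1]^r$ (the Duskin nerve of a $1$-category viewed as a $2$-category is its usual nerve, and $N$ preserves the product), and that $N(s_r)=Ns_r$, applying $N$ to the pullback of the first step yields exactly the square in the statement, which is therefore a pullback. The real content is the second step, and specifically — if one prefers to stay within the explicit definition of the Duskin nerve rather than appeal to normal lax functors — the verification that the $3$-simplex coherence condition is componentwise; once the $3$-truncations have been matched up, $3$-coskeletality of both corners of the square upgrades the bijection to every simplicial degree.
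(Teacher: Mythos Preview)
Your argument is correct and follows the same overall strategy as the paper: exhibit the square as the nerve of a pullback of $2$-categories, verify that pullback by inspecting objects and hom-categories, and then transport it along $N$. The only substantive difference is in the second step. The paper dispatches the preservation of pullbacks (and of the product $[1]^r$) in one line by observing that the Duskin nerve is a right adjoint; you instead give a direct argument via $3$-coskeletality, reducing to the componentwise nature of low-dimensional simplices. Your route is valid and perhaps more self-contained for a reader who has not seen the left adjoint to $N$, but it is longer and requires the extra remark that $\cosk_3$ and $\tr_3$ preserve limits. If you are happy to cite the adjunction, the paper's one-line justification is cleaner and also immediately handles the identification $N([1]^r)\cong\Delta[1]^r$ that you treat separately.
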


\begin{proof}
We argue that there is a pullback square of $2$-categories
\[
\begin{tikzcd}
\Sigma[\cD_1,\dots,\cD_r]\arrow[r]\arrow[d] &\Sigma[\cD_1] \times \ldots \times \Sigma[\cD_r]\arrow[d]\\
 {[r]} \arrow[r, "s_r",swap] & {[1]}\times\dots\times {[1]},
\end{tikzcd}
\]
built using the canonical maps from \cref{rmk1,rmk2}.
From there we can then conclude, given that the Duskin nerve respects pullbacks and products, being a right adjoint.

The square of $2$-categories above commutes by direct inspection.
In order to prove that the square is a pullback of $2$-categories, we check that it is a pullback at the level of objects, and that it is a locally a pullback at the level of hom-categories of any pair of objects in $\Sigma[\cD_1,\dots,\cD_r]$.

At the level of objects, we ought to look at the commutative square of sets
\[
\begin{tikzcd}
\Ob(\Sigma[\cD_1,\dots,\cD_r])\arrow[r]\arrow[d] &\Ob(\Sigma[\cD_1] \times \ldots \times \Sigma[\cD_r])\arrow[d]\\
\Ob( {[r]})\arrow[r, "s_r",swap] &\Ob( {[1]}\times\dots\times {[1]}).
\end{tikzcd}
\]
This square is expressed as the following square, where both vertical maps are bijections,
\[
\begin{tikzcd}
\{x_0,\dots,x_r\}\arrow[r ]\arrow[d,"\cong"] &\{x,y\} \times \ldots \times \{x,y\}\arrow[d,"\cong"]\\
 \{0,\dots,r\} \arrow[r, "s_r",swap] & \{0,1\}\times\dots\times \{0,1\}.
\end{tikzcd}
\]
The square is therefore a pullback of sets.

At the level of hom-categories, given any two objects $x_i$ and $x_j$ of the $(r+1)$-point suspension $\Sigma[\cD_1,\dots,\cD_r]$, we ought to look at the commutative square of categories
\[
\begin{tikzcd}
\Map_{\Sigma[\cD_1,\dots,\cD_r]}(x_{i},x_{j})\arrow[r]\arrow[d]& \Map_{\Sigma[\cD_1] \times \ldots \times \Sigma[\cD_r]}(\vec x_{s_r(i)},\vec x_{s_r(j)})\arrow[d]\\
\Map_{[r]}(i,j) \arrow[r] & \Map_{{[1]}\times\dots\times {[1]}}(s_r(i),s_r(j)),
\end{tikzcd}
\]
where $\vec x_{s_r(i)}$ and $\vec x_{s_r(j)}$ denote the images of $x_i$ and $x_j$ in $\Sigma[\cD_1] \times \ldots \times \Sigma[\cD_r]$.
If $i>j$, this square is easily expressed as the following square, where the left vertical map is an isomorphism of empty categories $[-1]$,
\[
\begin{tikzcd}
{[-1]}\arrow[r]\arrow[d,"\cong"] &\Map_{\Sigma[\cD_1] \times \ldots \times \Sigma[\cD_r]}(\vec x_{s_r(i)},\vec x_{s_r(j)})\arrow[d]\\
 {[-1]} \arrow[r] & \Map_{{[1]}\times\dots\times {[1]}}(s_r(i),s_r(j)).
\end{tikzcd}
\]
If instead $i\le j$, this square is easily expressed as the following square, where both horizontal arrows are isomorphisms of categories,
\[
\begin{tikzcd}
\Map_{\Sigma[\cD_{1},\dots,\cD_{r}]}(x_i,x_{j})\arrow[r,"\cong"]\arrow[d] &{\cD_{i+1}} \times \ldots \times{\cD_{j}}\arrow[d]\\
{[0]} \arrow[r, "\cong"] & {[0]}\times\dots\times {[0]}.
\end{tikzcd}
\]
The square is therefore a pullback of categories in both cases.
\end{proof}

We can now prove \cref{ThmB}.

\begin{thm}
\label{nerver+1suspension}
An $n$-simplex of the Duskin nerve of the $(r+1)$-point suspension $N\Sigma[\cD_1,\dots,\cD_r]$ can be uniquely described as an $r$-uple of matrices
$[k_i]\times [l_i]^{\op} \to \cD_i$
for $i=1,\dots,r$,
together with $k_i,l_i\ge-1$, $k_i+l_i=n-1$ and subject to the condition that $k_i\le k_j$ for $0\le i\le j\le r$. The simplicial structure of $N\Sigma[\cD_1,\dots,\cD_r]$ coincides with the one induced by $\mat{\cD_1}\times\ldots \times \mat{\cD_r}$.
\end{thm}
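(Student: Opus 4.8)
The plan is to derive the statement from the pullback square of \cref{MultisuspensionPullback} by transporting it along the isomorphisms $\Phi_{\cD_i}\colon\mat{\cD_i}\cong N\Sigma[\cD_i]$ of \cref{simplicesasmatrices}. Since each $\Phi_{\cD_i}$ is an isomorphism \emph{over} $\Delta[1]$ --- it intertwines $q_{\cD_i}$ with $p_{\cD_i}$, as recorded at the end of the proof of \cref{simplicesasmatrices} --- the square of \cref{MultisuspensionPullback} is carried to a pullback square
\[
\begin{tikzcd}
N\Sigma[\cD_1,\dots,\cD_r]\arrow[r]\arrow[d,swap,"p_{\cD_1,\dots,\cD_r}"] & \mat{\cD_1}\times\dots\times\mat{\cD_r}\arrow[d,"q_{\cD_1}\times\dots\times q_{\cD_r}"]\\
\Delta[r]\arrow[r,"Ns_r",swap] & \Delta[1]\times\dots\times\Delta[1].
\end{tikzcd}
\]

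Because $Ns_r$ is a monomorphism by \cref{rmk2}, this pullback exhibits $N\Sigma[\cD_1,\dots,\cD_r]$ as the sub-simplicial set of $\mat{\cD_1}\times\dots\times\mat{\cD_r}$ consisting of those simplices whose image under $q_{\cD_1}\times\dots\times q_{\cD_r}$ factors through $Ns_r$; in particular its simplicial structure is the restriction of the one on the product, which is the last assertion of the theorem. It then only remains to identify this subobject levelwise. An $n$-simplex of $\mat{\cD_1}\times\dots\times\mat{\cD_r}$ is an $r$-uple of matrices $\sigma_i\colon[k_i]\times[l_i]^{\op}\to\cD_i$ with $k_i,l_i\ge-1$ and $k_i+l_i=n-1$, and by \cref{MatDoverDelta[1]} the map $q_{\cD_i}$ sends $\sigma_i$ to $\chi_{k_i}$, so the image of the $r$-uple in $\Delta[1]^r$ is $(\chi_{k_1},\dots,\chi_{k_r})$. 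By \cref{rmk2} this lies in the image of $Ns_r$ if and only if $k_1\le\dots\le k_r$, and when it does the preimage $n$-simplex of $\Delta[r]$ is the unique one forced by injectivity of $Ns_r$. This is precisely the claimed description.

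As essentially all of the content is already packaged in the earlier results, I do not anticipate a genuine obstacle. The one point requiring care is the compatibility of the $\Phi_{\cD_i}$ with the maps to $\Delta[1]$, which is what makes the transported square above again a pullback; this is exactly the commutative square appearing at the end of the proof of \cref{simplicesasmatrices}, so the remaining work is purely bookkeeping.
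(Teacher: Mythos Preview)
Your argument is correct and follows essentially the same approach as the paper: transport the pullback square of \cref{MultisuspensionPullback} along the isomorphisms $\Phi_{\cD_i}$ over $\Delta[1]$ from \cref{simplicesasmatrices}, and then read off the $n$-simplices using \cref{MatDoverDelta[1]} and the characterization of the image of $Ns_r$ from \cref{rmk2}. Your write-up is in fact slightly more explicit than the paper's about why the monicity of $Ns_r$ yields a sub-simplicial set and hence the inherited simplicial structure, but the strategy is identical.
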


\begin{proof}
Combining \cref{MultisuspensionPullback} with \cref{simplicesasmatrices}, we obtain that the simplicial set $N(\Sigma[\cD_1,\dots,\cD_r])$ can be described as the pullback of simplicial sets
\[
\begin{tikzcd}
N\Sigma[\cD_1,\dots,\cD_r] \arrow[rrr,"{(\Phi_{\cD_1}^{-1}\mu_1,\dots,\Phi_{\cD_r}^{-1}\mu_r)}"]\arrow[d,"p_{\cD_1,\dots,\cD_r}",swap] &&& \mat{\cD_1} \times \ldots \times \mat{\cD_r}\arrow[d,"q_{\cD_1}\times\dots\times q_{\cD_r}"]\\
 {\Delta[r]} \arrow[rrr, "Ns_r",swap] &&& {\Delta[1]}\times\dots\times {\Delta[1]}.
\end{tikzcd}
\]
It follows from \cref{rmk1,rmk2} that an $n$-simplex of the Duskin nerve of the $(r+1)$-point suspension $\Sigma[\cD_1,\dots,\cD_r]$ can be uniquely described as a $r$-uple of matrices $[k_i]\times [l_i]^{\op} \to \cD_i$ for $i=1,\dots,r$,
together with $k_i,l_i\ge-1$, $k_i+l_i=n-1$ and subject to the condition that $k_i\le k_j$ for $0\le i\le j\le r$.
\end{proof}

\appendix

 \section{An explicit computation} 
The proof of \cref{simplicesasmatrices} relies on the coskeletality of the simplicial sets $N\Sigma\cD$ and $\mat{\cD}$, and does not enlighten how the correspondence between $\cD$-valued matrices $[k]\times[n-1-k]^{\op}\to\cD$ and $n$-simplices in the Duskin nerve of $\Sigma\cD$ really works for $n\ge4$.
In this expository section we illustrate with an example how one can reconstruct a matrix from a simplex and vice versa.

The next corollary describes a correspondence between \emph{triangulations} labeled in the $2$-faces of a given simplex of the Duskin nerve of $\Sigma\cD$, and \emph{monotone paths} inside the corresponding $\cD$-valued matrix.

For triangulations, we make use of the formalism from \cite[Ex.\ 2.2.15]{DK}.
A \emph{triangulation} $\cT$ of a convex $(n+1)$-gon with cyclically numbered vertices only contains triangles with vertices being vertices of the original polygon. To any such triangulation $\cT$, we can associate a simplicial subset $\Delta[\cT] \subset \sk_2\Delta[n]\subset \Delta[n]$ by choosing the $2$-faces corresponding to the triangles in the triangulation.

\begin{defn}
Let $n\ge2$. Given an $n$-simplex $\sigma$ of $N\Sigma\cD$ of type $k$ for $0\le k\le n-1$, a \emph{$\sigma$-labeled triangulation} consists of a triangulation $\cT$ of an $(n+1)$-gon that does not have any triangle completely contained neither in $\{0,\ldots, k\}$ nor in $\{k+1,\ldots, n\}$, together with the composite
\[\Delta[\cT] \hookrightarrow \Delta[n] \xrightarrow{\sigma} N\Sigma\cD.\]
\end{defn}

In particular, the definition requires a compatibility between the triangulation $\cT$ and the simplex $\sigma$, namely that the $2$-simplices in the image of the composite $\Delta[\cT]\to N\Sigma\cD$ above are not degeneracies of a $0$-simplex.

\begin{ex}
Given $\sigma$ a $3$-simplex of $N\Sigma\cD$ of type $1$ given by
\[
\simpfverArrow{\id_x}{c}{\id_y}{a}{b}{f}{\varphi}{\gamma}{\psi}\simpfverArrowcontinued{\theta}{x}{x}{y}{y},
\]
an example of a $\sigma$-labeled triangulation is
\[
\begin{tikzcd}[column sep=1.3cm, row sep=0.4cm, baseline=(current  bounding  box.center), ampersand replacement=\&]
 y \& y \arrow[l, "\id_y" swap] \\
 {} \&  \\
 x \arrow[r, "\id_x" swap]\arrow[uu, "a", ""{name=a031,inner sep=2pt, swap} ] \arrow[uur, "b"{near start, xshift=0.15cm}, ""{name=a02,inner sep=2pt, swap}]\& x \arrow[uu, "c" swap]
 \arrow[Rightarrow, from=a031, to=1-2, shorten >= 0.3cm, "\varphi" ]
 \arrow[Rightarrow, from=a02, to=3-2, shorten >= 0.3cm, "\gamma" swap ]
 \end{tikzcd}
 \]
\end{ex}

Recall e.g.~
from \cite[Def.~65]{VerityComplicialI} that a \emph{shuffle} of $\Delta[k]\times\Delta[l]$ for $k,l\ge0$ is a non-degenerate $(k+l)$-simplex of $\Delta[k]\times\Delta[l]$. 
An easy and useful characterization of these
is that they are precisely the 
functors\footnote{We choose to have $[l]^{\op}$ rather than $[l]$ in the second factor. This convention is more convenient to the setup of the paper and not restrictive, modulo consistent adaptation of the relation between the indices $\alpha(i)$ and $\beta(i)$ in the next formula.}
\[S:=(\alpha,\beta)\colon[k+l]\to[k]\times[l]^{\op}\]
that satisfy the \emph{ordinate summation property}: for all $0\le i\le k+l$
\[\alpha(i) + l-\beta(i) = i .\]

\begin{defn}
Given a $\cD$-valued matrix $M\colon[k]\times[n-1-k]^{\op}\to\cD$ with $0\le k\le n-1$, a \emph{monotone path inside the matrix $M$} consists of a shuffle $S\colon[n-1]\to[k]\times[n-1-k]^{\op}$, 
together with
the restriction \[[n-1]\stackrel{S}{\longrightarrow}[k]\times[n-1-k]^{\op}\stackrel{M}{\longrightarrow}\cD\]
of $M$ along $S$.
\end{defn}

\begin{ex}
If $M\colon[1]\times[1]^{\op}\to\cD$ is a functor given by
\[\begin{tikzcd}
a \arrow[d,"\psi" swap]\arrow[r,"\varphi"]&b\arrow[d,"\gamma"] \\
 f \arrow[r,"\theta" swap]&c
\end{tikzcd}\]
a monotone path inside $M$ is for instance
\[\begin{tikzcd}
a\arrow[r,"\varphi"]&b\arrow[d,"\gamma"] \\
 &c.\\
\end{tikzcd}\]
\end{ex}

\begin{cor} 
Let $\cD$ be a $1$-category, $n\ge2$, $\sigma$ an $n$-simplex of $N\Sigma\cD$ of type $k$ for $0\le k\le n-1$, and  $M_{\sigma}\colon[k]\times[n-1-k]^{\op}\to\cD$ the corresponding $\cD$-valued matrix according to \cref{simplicesasmatrices}.
There is a bijective correspondence
between $\sigma$-labeled triangulations $\Delta[\cT]\to N\Sigma\cD$ and monotone paths $P\colon[n-1]\to\cD$ inside $M_{\sigma}\colon[k]\times[n-1-k]^{\op}\to\cD$.
\end{cor}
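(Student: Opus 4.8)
The strategy is to set up both sides as objects that are completely determined by their behaviour on $2$-faces, and then match them term-by-term through the dictionary provided by \cref{simplicesasmatrices}. Since $N\Sigma\cD$ is $3$-coskeletal — and in fact, as the data here only involves $2$-simplices, the relevant information is $2$-truncated — a $\sigma$-labeled triangulation $\Delta[\cT]\to N\Sigma\cD$ is the same as an assignment to each triangle $\{i<j<l\}$ of $\cT$ of the corresponding $2$-face $\sigma|_{\{ijl\}}$, which by hypothesis is never the degeneracy of a $0$-simplex. On the other side, a monotone path inside $M_\sigma$ is a shuffle $S=(\alpha,\beta)\colon[n-1]\to[k]\times[n-1-k]^{\op}$ together with $M_\sigma\circ S$; again $S$ is determined by, and determines, a lattice path in the grid $[k]\times[n-1-k]$ from the corner $(0,n-1-k)$ to the corner $(k,0)$ taking unit steps. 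So the heart of the corollary is a bijection between triangulations $\cT$ of the $(n+1)$-gon avoiding triangles inside $\{0,\dots,k\}$ or inside $\{k+1,\dots,n\}$, and monotone lattice paths in $[k]\times[n-1-k]$; the labelings then get carried along automatically.

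First I would make this combinatorial bijection explicit. The key observation is that under the map $p_\cD\colon N\Sigma\cD\to\Delta[1]$, the simplex $\sigma$ maps to $\chi_k$, so the $n+1$ vertices $0,\dots,n$ of the polygon split into a block $L=\{0,\dots,k\}$ sent to $x$ and a block $R=\{k+1,\dots,n\}$ sent to $y$. A triangle $\{i<j<l\}$ is "admissible" precisely when it is not contained in $L$ and not contained in $R$, i.e.\ it has at least one vertex in each block — equivalently, either ($i\in L$ and $l\in R$) in which case the middle vertex $j$ can be on either side, covering both the "$\varphi$-type" and "$\gamma$-type" $2$-simplices of \cref{simplicesasmatrices}(2). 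A triangulation of the polygon in which \emph{every} triangle straddles the cut between $L$ and $R$ is exactly a triangulation of the "digon" obtained by contracting the path $0\to1\to\dots\to k$ and the path $k+1\to\dots\to n$; such triangulations are classical Catalan-type objects and are in canonical bijection with lattice paths from $(0,n-1-k)$ to $(k,0)$. Concretely: reading the triangulation from the edge $\{0,n\}$ inward, each triangle consumes either one vertex of $L$ or one vertex of $R$, and recording this sequence of "$L$-step / $R$-step" choices produces exactly the shuffle $S$. I would spell out this correspondence as an explicit inverse pair of maps $\cT\mapsto S_\cT$ and $S\mapsto\cT_S$.

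Next I would verify that the labelings correspond. Given $\sigma$ with associated matrix $M_\sigma$, and a triangle $\{i<j<l\}$ of $\cT$ straddling the cut, one computes $\sigma|_{\{ijl\}}$ using the explicit formula for $\Phi_\cD$ on $3$-truncations: it is the $2$-simplex of $\Sigma\cD$ determined by a single morphism of $\cD$, namely an entry or composite of consecutive entries of $M_\sigma$ read off from the position of $\{i,j,l\}$ relative to the grid. On the path side, the shuffle $S_\cT$ passes through precisely the grid vertices dual to the triangles of $\cT$, and $M_\sigma\circ S_\cT$ records exactly the same morphisms of $\cD$ on its edges. The bookkeeping here is the routine-but-real part: one has to check that "the $2$-face of $\sigma$ indexed by triangle $t$" and "the edge of the path $P$ indexed by the grid-vertex dual to $t$" name the same arrow of $\cD$, using the face formulas from \cref{matrixsset} and the identification in the proof of \cref{simplicesasmatrices}. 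Because both $\Delta[\cT]\to N\Sigma\cD$ and $P\colon[n-1]\to\cD$ are built out of these $2$-dimensional data and nothing else, agreeing on the arrows suffices to conclude the labeled objects correspond.

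The main obstacle I anticipate is getting the combinatorial bijection between straddling-triangulations and monotone lattice paths stated cleanly and checking it is well-defined and bijective — in particular verifying that the "avoid triangles inside $L$ or inside $R$" condition is \emph{exactly} what cuts the Catalan number of all triangulations of the $(n+1)$-gon down to the binomial number $\binom{n-1}{k}$ of lattice paths, and that the induction (peeling off the triangle on the edge $\{0,n\}$, which removes one vertex from $L$ or from $R$ and reduces to a smaller polygon of the same shape) really does set up a bijection rather than merely a surjection. Once that skeleton is in place, matching the $\cD$-labels is a direct unwinding of the formulas already established in \cref{section1}. I would therefore organize the proof as: (1) reduce both sides to $2$-truncated data; (2) establish the triangulation $\leftrightarrow$ lattice-path bijection by induction on $n$; (3) check the labels agree via the explicit description of $\Phi_\cD$ on low-dimensional simplices.
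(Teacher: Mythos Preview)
Your proposal is correct and follows essentially the same route as the paper: the paper reduces the corollary to a purely combinatorial lemma (triangulations of the $(n+1)$-gon with no triangle contained in $\{0,\dots,k\}$ or $\{k+1,\dots,n\}$ biject with shuffles $[n-1]\to[k]\times[n-1-k]^{\op}$), and proves that lemma by induction on $n$ by showing the triangle on the edge $(0,n)$ must be $(0,1,n)$ or $(0,n-1,n)$ and then peeling it off---exactly your step~(2). Your steps~(1) and~(3) unpack what the paper compresses into the phrase ``direct consequence of \cref{simplicesasmatrices}''; since a $\sigma$-labeled triangulation is by definition the composite $\Delta[\cT]\hookrightarrow\Delta[n]\xrightarrow{\sigma}N\Sigma\cD$ and a monotone path is by definition $M_\sigma\circ S$, the labels are determined once $\cT$ and $S$ are, so no separate label-matching argument is actually required.
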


The corollary is a direct consequence of \cref{simplicesasmatrices} along with the following combinatorial fact\footnote{
The lemma appears to be a variant of the classical fact that the Catalan number $C_{n-1}$ can be expressed in two equivalent ways: as the number of triangulations of an $(n+1)$-gon, or as the number of monotone lattice paths along the edges of a grid with $(n-1)\times (n-1)$ square cells which do not pass above the diagonal. However, we are not aware of a direct comparison with the statement of our lemma.
}.

\begin{lem}
Let $n\ge2$ and let $0\leq k \le n-1$. Then
there is a bijective correspondence
between triangulations $\Delta[\cT]\to\Delta[n]$ of an $(n+1)$-gon which do not have a triangle completely contained neither in $\{0,\ldots, k\}$ nor in $\{k+1,\ldots, n\}$ and shuffles $S\colon [n-1]\to [k]\times[n-1-k]^{\op}$.
\end{lem}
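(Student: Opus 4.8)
The plan is to construct the bijection explicitly using the combinatorics of diagonals and to verify it respects the exclusion condition. First I would set up the correspondence on the side of triangulations. A triangulation $\cT$ of the $(n+1)$-gon on vertices $\{0,1,\dots,n\}$ is determined by its set of $n-2$ non-crossing internal diagonals. The key observation is that the condition ``$\cT$ has no triangle entirely in $\{0,\dots,k\}$ and none entirely in $\{k+1,\dots,n\}$'' forces the diagonal $\{0,n\}$ — wait, more precisely it forces a very rigid structure: I would show that such a $\cT$ must contain, among its triangles, a unique ``fan spine'' crossing the cut between $k$ and $k+1$. Concretely, consider for each $0\le i\le k+l$ (where $l=n-1-k$) the unique triangle of $\cT$ that ``sees'' the edge of the polygon in a suitable sense; the upshot is that the diagonals of $\cT$ that separate $\{0,\dots,k\}$ from $\{k+1,\dots,n\}$, listed in order, give exactly the data of a monotone staircase. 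I would make this precise by associating to $\cT$ the sequence of its diagonals having one endpoint in $\{0,\dots,k\}$ and one in $\{k+1,\dots,n\}$; there are exactly $n-1=k+l+1$ relevant ``separating chords'' (counting the two boundary edges $\{k,k+1\}$ — no, rather $k+l+1$ of them including degenerate boundary cases), and they are linearly ordered by non-crossing.

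Next I would produce the shuffle from this ordered list of separating chords. A separating chord has the form $\{a,\,k+1+b\}$ with $0\le a\le k$ and $0\le b\le l$, i.e.\ it is a pair $(a,b)\in[k]\times[l]$. Moving from one separating chord to the next (as one sweeps across the triangulation) either decreases the left endpoint by going ``up'' in $[k]$, or increases the right endpoint by going across in $[l]$; each triangle of $\cT$ that straddles the cut accounts for exactly one such elementary step, and there are $n-1$ chords so $n-2$ steps — matching the $k+l=n-1$ needed for a shuffle $[n-1]\to[k]\times[l]^{\op}$. I would check that the sequence of pairs $(a(i),b(i))$ obtained this way is weakly monotone in $[k]\times[l]^{\op}$ and satisfies the ordinate summation property $a(i)+l-b(i)=i$, hence defines a genuine shuffle $S=(\alpha,\beta)$ in the sense recalled before the lemma. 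The exclusion condition is precisely what guarantees the path starts at $(0,l)$ (chord $\{0,k+1\}$ or the appropriate boundary edge) and ends at $(k,0)$, with no ``stalling'' — a triangle contained in $\{0,\dots,k\}$ would correspond to two consecutive chords with the same pair, and likewise on the other side.

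For the inverse, given a shuffle $S\colon[n-1]\to[k]\times[l]^{\op}$ I would read off its $n-1$ vertices $(\alpha(i),\beta(i))$, interpret each as the chord $\{\alpha(i),\,k+1+\beta(i)\}$ of the polygon, and take $\cT$ to be the triangulation generated by these chords (they are automatically pairwise non-crossing by monotonicity of $S$, and the $n-2$ elementary steps fill in $n-2$ triangles straddling the cut, while the remaining regions on each side are triangulated trivially as degenerate fans — but the exclusion of non-straddling triangles is exactly ensured because consecutive chords differ). I would then verify the two constructions are mutually inverse, which is essentially bookkeeping once the dictionary ``separating chord $\leftrightarrow$ lattice point, straddling triangle $\leftrightarrow$ unit step'' is set up.

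The main obstacle I anticipate is the careful handling of the boundary/degenerate cases: the endpoints $k=0$ and $k=n-1$, and more subtly the identification of the two ``extreme'' separating chords with the polygon edges $\{0,k+1\}$-side and $\{k,n\}$-side, so that the count of chords comes out to exactly $n-1$ rather than $n-3$. Getting the off-by-one indexing to line up with the ordinate summation property $\alpha(i)+l-\beta(i)=i$ — and checking that ``no triangle inside $\{0,\dots,k\}$'' is equivalent to ``$S$ is non-degenerate on the $[k]$-coordinate transitions'' and symmetrically — is where the real content sits; everything else is a translation exercise. An alternative, cleaner route, if the direct bijection proves unwieldy, is to count both sides: both triangulations-avoiding-the-two-blocks and shuffles of $\Delta[k]\times\Delta[l]$ are counted by the same ballot-type number, and one can set up a generating-function or direct recursive bijection respecting the recursive block decomposition of the polygon. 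But I would first attempt the explicit chord-to-lattice-point map, since it is the most illuminating and is clearly the one the authors have in mind given the staircase pictures.
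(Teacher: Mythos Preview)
Your approach is correct in outline but genuinely different from the paper's. The paper argues by induction on $n$: for $n>2$ it shows that the triangle containing the edge $(0,n)$ must be either $(0,1,n)$ or $(0,n-1,n)$ (any other $(0,p,n)$ would force a sub-triangulation entirely inside one of the forbidden blocks), peels that triangle off, and applies the induction hypothesis to the remaining $n$-gon; the peeled triangle contributes the first horizontal or vertical step of the shuffle. Your proposal instead builds the bijection all at once via the dictionary ``separating chord $\{a,\,k{+}1{+}b\}\leftrightarrow$ lattice point $(a,b)$, straddling triangle $\leftrightarrow$ unit step''. These two constructions produce the same bijection---the paper's inductive peel is exactly the removal of the first step of your lattice path at the $(0,l)$ end---but the inductive packaging is shorter and sidesteps precisely the bookkeeping you flag as the main obstacle.

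One concrete slip to fix if you carry out your version: the count of separating chords is $n$, not $n-1$. With the exclusion condition every internal diagonal is separating (otherwise it would cut off a sub-polygon triangulated entirely inside one block), so you get $n-2$ internal separating diagonals plus the two boundary edges $\{k,k{+}1\}$ and $\{0,n\}$, hence $n$ chords and $n-1$ straddling triangles. This matches the $n$ lattice points and $n-1=k+l$ unit steps of a shuffle $[n-1]\to[k]\times[l]^{\op}$; your equation ``$n-1=k+l+1$'' should read $n=k+l+1$. Once that is corrected, the ordinate-summation check and the inverse construction go through as you describe. Your direct picture is in fact the one the paper uses, without proof, in the worked appendix example; the inductive argument is what they chose for the formal proof because it avoids the index-chasing.
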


\begin{proof}[Proof of the lemma]
The lemma is proven by induction on $n\ge2$. If $n=2$ there is a unique triangulation of the $(2+1)$-gon given by
\[
\begin{tikzcd}[baseline=(current  bounding  box.center)]
 & 1 \arrow[rd] & \\
    0 \arrow[ru]
  \arrow[rr]
  && 2
\end{tikzcd}
\]
both when $k=0$ or $k=1$. On the other side, there is a unique shuffle given by $[1]\to [0]\times [1]^{\op}$ for $k=0$ and a unique shuffle $[1]\to [1]\times [0]^{\op}$ for $k=1$.

If $n>2$, we first show that for a given triangulation as above the edge $(0,n)$ is contained exactly in one triangle that is of the form $(0,n-1,n)$ or $(0,1,n)$ . To see this, assume otherwise that $(0,n)$ is contained in the triangle $(0,p,n)$ for some $1<p<n-1$. We only consider $p\leq k$, the other case being symmetric. Then the triangulation of the $(n+1)$-gon we started with induces a triangulation of the $(p+1)$-gon with vertices $0, 1, \ldots, p$, since $(0,p)$ is one of the edges in the triangulation. But we assumed that no triangles include only vertices in $0, 1, \ldots, k$, leading to a contradiction. 

It then follows 
that the given triangulation of the $(n+1)$-gon includes exactly one of the triangles $(0,n-1,n)$ and $(0,1,n)$, and is completely and uniquely described by such a triangle and the triangulation of the remaining $n$-gon.
By induction hypothesis, this corresponds to a shuffle of the form $[n-1-1]\to [k]\times [n-2-k]^{\op}$ in the first case and of the form $[n-1-1]\to [k-1]\times [n-1-k]^{\op}$ in the second case, together with an extra arrow that can be connected to this shuffle (horizontally in the first case and vertically in the second case).
By connecting these two pieces together we obtain a shuffle of the form $[n-1]\to [k]\times[n-1-k]^{\op}$, and all such shuffles arise in this way.
\end{proof}

We illustrate with an example how the proposition can be used to write down the matrix associated to a simplex and vice versa. The idea is that, given a triangulation labeled in a simplex, each simplex with a degenerate $2$-nd face contributes as a vertical step in the corresponding path and each $2$-simplex with a degenerate $0$-th face contributes as a horizontal step.

\begin{ex}
Let $\cP$ be a poset. Consider the $4$-simplex $\sigma$ of type $1$ of the Duskin nerve of $\Sigma\cP$
determined by the following $2$-skeleton
\[
 \begin{tikzpicture}[scale=.9, font=\scriptsize]
  \def\l{1.8cm}
\def\vertexa{x}
\def\vertexb{x}
\def\vertexc{y}
\def\vertexd{y}
\def\vertexe{y}
\def\edgeab{}
\def\edgebc{ p_{10}}
\def\edgecd{}
\def\edgede{}
\def\edgeae{p_{02}}
\def\edgebe{p_{12}}
\def\edgebd{p_{11}}
\def\edgead{p_{01}}
\def\edgece{}
\def\edgeac{p_{00}}
\def\triangleabe{}
\def\trianglebde{}
\def\trianglebcd{}
\def\triangleade{}
\def\triangleabd{}
\def\triangleacd{}
\def\triangleabc{}
\def\triangleace{}
\def\trianglecde{}
\def\trianglebce{}

    \begin{scope}  
  \draw[fill] (0,0) node (b0){$\vertexa$}; 
  \draw[fill] (\l,0) node (b1){$\vertexb$};
  \draw[fill] (1.5*\l,0.75*\l)  node (b2){$\vertexc$};
  \draw[fill] (0.5*\l,1.5*\l) node (b3){$\vertexd$};
  \draw[fill] (-0.5*\l,0.75*\l) node (b4){$\vertexe$};

  \draw[<-] (b1)--node[below]{$\edgeab$}(b0);
  \draw[->] (b1)--node[below,xshift=0.1cm]{$\edgebc$}(b2);
  \draw[->] (b2)--node[above]{$\edgecd$}(b3);
  \draw[->] (b3)--node[above]{$\edgede$}(b4);
    \draw[<-] (b4)--node[left](A2){}node[left](A3){$\edgeae$}(b0);
  \draw[->] (b1)--node[right](A4){}node[below,xshift=-0.06cm]{$\edgebd$}(b3);
  \draw[<-] (b3)--node[left](A1){}node[right, yshift=0.1cm]{$\edgead$}(b0);

    \draw[twoarrowlonger] (A1)--node[below]{$\triangleabd$}(b1);
\draw[twoarrowlonger] (A2)--node[below, yshift=-0.2cm]{$\triangleade$}(b3);
\draw[twoarrowlonger] (A4)--node[below]{$\trianglebcd$}(b2);
  \end{scope}
   
      \begin{scope}[xshift=2.5*\l] 
  \draw[fill] (0,0) node (b0){$\vertexa$}; 
  \draw[fill] (\l,0) node (b1){$\vertexb$};
  \draw[fill] (1.5*\l,0.75*\l)  node (b2){$\vertexc$};
  \draw[fill] (0.5*\l,1.5*\l) node (b3){$\vertexd$};
  \draw[fill] (-0.5*\l,0.75*\l) node (b4){$\vertexe$};

      \draw[<-] (b1)--node[below]{$\edgeab$}(b0);
  \draw[->] (b1)--node[below,xshift=0.1cm]{$\edgebc$}(b2);
  \draw[->] (b2)--node[below](B2){}node[above]{$\edgecd$}(b3);
  \draw[->] (b3)--node[above]{$\edgede$}(b4);
    \draw[<-] (b4)--node[left](B4){}node[left](A3){$\edgeae$}(b0);
  \draw[->] (b0)--node[left, xshift=0.2cm, yshift=-0.1cm](B1){}node[above, near start]{$\edgeac$}(b2);
  \draw[<-] (b3)--node[below](B3){}node[right, yshift=0.1cm]{$\edgead$}(b0);

    \draw[twoarrowlonger] ($(B3)+(0,-0.2cm)$)--node[above]{$\triangleacd$}(b2);
     \draw[twoarrowlonger] ($(B1)+(-0.1cm, 0.1cm)$)--node[above, xshift=0.2cm]{$\triangleabc$}(b1);
     \draw[twoarrowlonger] (B4)--node[below, yshift=-0.2cm]{$\triangleade$}(b3);
  \end{scope}
   
      \begin{scope}[xshift=4*\l, yshift=-1.5*\l]
  \draw[fill] (0,0) node (b0){$\vertexa$}; 
  \draw[fill] (\l,0) node (b1){$\vertexb$};
  \draw[fill] (1.5*\l,0.75*\l)  node (b2){$\vertexc$};
  \draw[fill] (0.5*\l,1.5*\l) node (b3){$\vertexd$};
  \draw[fill] (-0.5*\l,0.75*\l) node (b4){$\vertexe$};

          \draw[<-] (b1)--node[below]{$\edgeab$}(b0);
  \draw[->] (b1)--node[below,xshift=0.1cm]{$\edgebc$}(b2);
  \draw[->] (b2)--node[above]{$\edgecd$}(b3);
  \draw[->] (b3)--node[above]{$\edgede$}(b4);
    \draw[<-] (b4)--node[left](C2){}node[left](C3){$\edgeae$}(b0);
  \draw[->] (b0)--node[left, xshift=0.2cm, yshift=-0.1cm](C1){}node[above, near start]{$\edgeac$}(b2);
  \draw[->] (b2)--node[below]{$\edgece$}node[above](C4){}(b4);

\draw[twoarrowlonger] ($(C1)+(-0.1cm, 0.1cm)$)--node[above, xshift=0.2cm]{$\triangleabc$}(b1);
 \draw[twoarrowlonger] (C3)--node[below, near start]{$\triangleace$}($(b2)+(-0.5cm,-0.15cm)$);
  \draw[twoarrowlonger] (C4)--node[left]{$\trianglecde$}(b3);
  \end{scope}
   
      \begin{scope}[xshift=1.25*\l, yshift=-2*\l]
  \draw[fill] (0,0) node (b0){$\vertexa$}; 
  \draw[fill] (\l,0) node (b1){$\vertexb$};
  \draw[fill] (1.5*\l,0.75*\l)  node (b2){$\vertexc$};
  \draw[fill] (0.5*\l,1.5*\l) node (b3){$\vertexd$};
  \draw[fill] (-0.5*\l,0.75*\l) node (b4){$\vertexe$};

              \draw[<-] (b1)--node[below]{$\edgeab$}(b0);
  \draw[->] (b1)--node[below,xshift=0.1cm]{$\edgebc$}(b2);
  \draw[->] (b2)--node[above]{$\edgecd$}(b3);
  \draw[->] (b3)--node[above]{$\edgede$}(b4);
    \draw[<-] (b4)--node[left](D2){}node[left](D3){$\edgeae$}(b0);
  \draw[->] (b2)--node[below]{$\edgece$}node[above](D4){}(b4);
  \draw[->] (b1)--node[above]{$\edgebe$}node[above](D1){}(b4);

\draw[twoarrowlonger] (D3)--node[below, xshift=-0.2cm]{$\triangleabe$}($(b1)+(-0.5cm,0.15cm)$);
  \draw[twoarrowlonger] (D4)--node[left]{$\trianglecde$}(b3);
  \draw[twoarrowlonger] (D1)--node[below]{$\trianglebce$}($(b2)+(-0.5cm,-0.15cm)$);
  \end{scope}
   
      \begin{scope}[xshift=-1.75*\l, yshift=-1.5*\l]
  \draw[fill] (0,0) node (b0){$\vertexa$}; 
  \draw[fill] (\l,0) node (b1){$\vertexb$};
  \draw[fill] (1.5*\l,0.75*\l)  node (b2){$\vertexc$};
  \draw[fill] (0.5*\l,1.5*\l) node (b3){$\vertexd$};
  \draw[fill] (-0.5*\l,0.75*\l) node (b4){$\vertexe$};

              \draw[<-] (b1)--node[below]{$\edgeab$}(b0);
  \draw[->] (b1)--node[below,xshift=0.1cm]{$\edgebc$}(b2);
  \draw[->] (b2)--node[above]{$\edgecd$}(b3);
  \draw[->] (b3)--node[above]{$\edgede$}(b4);
    \draw[<-] (b4)--node[left](E2){}node[left](E3){$\edgeae$}(b0);

  \draw[->] (b1)--node[right](E4){}node[below,xshift=-0.06cm]{$\edgebd$}(b3);
  \draw[->] (b1)--node[above](E1){$\edgebe$}(b4);

\draw[twoarrowlonger] (E3)--node[below, xshift=-0.2cm]{$\triangleabe$}($(b1)+(-0.5cm,0.15cm)$);
\draw[twoarrowlonger] (E4)--node[below]{$\trianglebcd$}(b2);
\draw[twoarrowlonger] (E1)--node[left]{$\trianglebde$}(b3);
  \end{scope}
 \end{tikzpicture}\]
where $p_{ij}$ belongs to $\cP$ and let's determine the $\cP$-valued matrix $M_{\sigma}$ that corresponds to it according to \cref{simplicesasmatrices}.

Given that the edge $(1,2)$ of $\sigma$ is non-degenerate, the given $4$-simplex is of type $1$ and we can use \cref{simplicesasmatrices} to assert that the matrix $M_{\sigma}$ has to be of the form
\[M_{\sigma}\colon[1]\times[4-1-1]^{\op}=[1]\times[2]^{\op}\to\cP.\]
The $\sigma$-triangulation
\[
\begin{tikzpicture}[scale=1.2, font=\scriptsize]
 \def\l{1.8cm}
\def\vertexa{x}
\def\vertexb{x}
\def\vertexc{y}
\def\vertexd{y}
\def\vertexe{y}
\def\edgeab{}
\def\edgebc{p_{10}}
\def\edgecd{}
\def\edgede{}
\def\edgeae{p_{02}}
\def\edgebe{p_{12}}
\def\edgebd{p_{11}}
\def\edgead{p_{01}}
\def\edgece{}
\def\edgeac{p_{00}}
\def\triangleabe{}
\def\trianglebde{}
\def\trianglebcd{}
\def\triangleade{}
\def\triangleabd{}
\def\triangleacd{}
\def\triangleabc{}
\def\triangleace{}
\def\trianglecde{}
\def\trianglebce{}
  \begin{scope}[xshift=-1.75*\l, yshift=-1.5*\l]
  \draw[fill] (0,0) node (b0){$\vertexa$}; 
  \draw[fill] (\l,0) node (b1){$\vertexb$};
  \draw[fill] (1.5*\l,0.75*\l)  node (b2){$\vertexc$};
  \draw[fill] (0.5*\l,1.5*\l) node (b3){$\vertexd$};
  \draw[fill] (-0.5*\l,0.75*\l) node (b4){$\vertexe$};

              \draw[<-] (b1)--node[below]{$\edgeab$}(b0);
  \draw[->] (b1)--node[below, xshift=0.1cm]{$\edgebc$}(b2);
  \draw[->] (b2)--node[above]{$\edgecd$}(b3);
  \draw[->] (b3)--node[above]{$\edgede$}(b4);
    \draw[<-] (b4)--node[left](E2){}node[left](E3){$\edgeae$}(b0);

  \draw[->] (b1)--node[right](E4){}node[below,xshift=-0.06cm]{$\edgebd$}(b3);
  \draw[->] (b1)--node[above](E1){$\edgebe$}(b4);

\draw[twoarrowlonger] (E3)--node[below, xshift=-0.2cm]{$\triangleabe$}($(b1)+(-0.5cm,0.15cm)$);
\draw[twoarrowlonger] (E4)--node[below]{$\trianglebcd$}(b2);
\draw[twoarrowlonger] (E1)--node[left]{$\trianglebde$}(b3);
  \end{scope}
 \end{tikzpicture}
 \]
corresponds to the monotone path in $M_{\sigma}$ that covers fully the left column and the bottom row and the $1$-st row, and is as follows:
\[{
\scriptsize \begin{tikzcd}
 p_{02} \arrow{d} & \\
  p_{1 2} \arrow{r} & p_{11}\arrow{r}  & p_{10}.
\end{tikzcd}
}\]
The $\sigma$-triangulation
\[
\begin{tikzpicture}[scale=1.2, font=\scriptsize]
 \def\l{1.8cm}
\def\vertexa{x}
\def\vertexb{x}
\def\vertexc{y}
\def\vertexd{y}
\def\vertexe{y}
\def\edgeab{}
\def\edgebc{p_{10}}
\def\edgecd{}
\def\edgede{}
\def\edgeae{p_{02}}
\def\edgebe{p_{12}}
\def\edgebd{p_{11}}
\def\edgead{p_{01}}
\def\edgece{}
\def\edgeac{p_{00}}
\def\triangleabe{}
\def\trianglebde{}
\def\trianglebcd{}
\def\triangleade{}
\def\triangleabd{}
\def\triangleacd{}
\def\triangleabc{}
\def\triangleace{}
\def\trianglecde{}
\def\trianglebce{}
  \begin{scope}  
  \draw[fill] (0,0) node (b0){$\vertexa$}; 
  \draw[fill] (\l,0) node (b1){$\vertexb$};
  \draw[fill] (1.5*\l,0.75*\l)  node (b2){$\vertexc$};
  \draw[fill] (0.5*\l,1.5*\l) node (b3){$\vertexd$};
  \draw[fill] (-0.5*\l,0.75*\l) node (b4){$\vertexe$};

  \draw[<-] (b1)--node[below]{$\edgeab$}(b0);
  \draw[->] (b1)--node[below, xshift=0.1cm]{$\edgebc$}(b2);
  \draw[->] (b2)--node[above]{$\edgecd$}(b3);
  \draw[->] (b3)--node[above]{$\edgede$}(b4);
    \draw[<-] (b4)--node[left](A2){}node[left](A3){$\edgeae$}(b0);
  \draw[->] (b1)--node[right](A4){}node[below, xshift=-0.06cm]{$\edgebd$}(b3);
  \draw[<-] (b3)--node[left](A1){}node[right, yshift=0.1cm]{$\edgead$}(b0);

    \draw[twoarrowlonger] (A1)--node[below]{$\triangleabd$}(b1);
\draw[twoarrowlonger] (A2)--node[below, yshift=-0.2cm]{$\triangleade$}(b3);
\draw[twoarrowlonger] (A4)--node[below]{$\trianglebcd$}(b2);
  \end{scope}
 \end{tikzpicture}
 \]
corresponds to the monotone path in $M_{\sigma}$ that goes through the $1$-st column, and is as follows:
\[{
\scriptsize \begin{tikzcd}
 p_{02} \arrow{r}&p_{01}\arrow{d}& \\
  & p_{11}\arrow{r}  & p_{10}.
\end{tikzcd}
}\]
The $\sigma$-triangulation
\[
\begin{tikzpicture}[scale=1.2, font=\scriptsize]
 \def\l{1.8cm}
\def\vertexa{x}
\def\vertexb{x}
\def\vertexc{y}
\def\vertexd{y}
\def\vertexe{y}
\def\edgeab{}
\def\edgebc{p_{10}}
\def\edgecd{}
\def\edgede{}
\def\edgeae{p_{02}}
\def\edgebe{p_{12}}
\def\edgebd{p_{11}}
\def\edgead{p_{01}}
\def\edgece{}
\def\edgeac{p_{00}}
\def\triangleabe{}
\def\trianglebde{}
\def\trianglebcd{}
\def\triangleade{}
\def\triangleabd{}
\def\triangleacd{}
\def\triangleabc{}
\def\triangleace{}
\def\trianglecde{}
\def\trianglebce{}
  \begin{scope}[xshift=2.5*\l] 
  \draw[fill] (0,0) node (b0){$\vertexa$}; 
  \draw[fill] (\l,0) node (b1){$\vertexb$};
  \draw[fill] (1.5*\l,0.75*\l)  node (b2){$\vertexc$};
  \draw[fill] (0.5*\l,1.5*\l) node (b3){$\vertexd$};
  \draw[fill] (-0.5*\l,0.75*\l) node (b4){$\vertexe$};

      \draw[<-] (b1)--node[below]{$\edgeab$}(b0);
  \draw[->] (b1)--node[below, xshift=0.1cm]{$\edgebc$}(b2);
  \draw[->] (b2)--node[below](B2){}node[above]{$\edgecd$}(b3);
  \draw[->] (b3)--node[above]{$\edgede$}(b4);
    \draw[<-] (b4)--node[left](B4){}node[left](A3){$\edgeae$}(b0);
  \draw[->] (b0)--node[left, xshift=0.2cm, yshift=-0.1cm](B1){}node[above, near start]{$\edgeac$}(b2);
  \draw[<-] (b3)--node[below](B3){}node[right, yshift=0.1cm]{$\edgead$}(b0);

    \draw[twoarrowlonger] ($(B3)+(0,-0.2cm)$)--node[above]{$\triangleacd$}(b2);
     \draw[twoarrowlonger] ($(B1)+(-0.1cm, 0.1cm)$)--node[above, xshift=0.2cm]{$\triangleabc$}(b1);
     \draw[twoarrowlonger] (B4)--node[below, yshift=-0.2cm]{$\triangleade$}(b3);
  \end{scope}
 \end{tikzpicture}
 \]
corresponds to the monotone path in $M_{\sigma}$ that covers fully the $0$-th row and the last column, and is as follows:
\[{
\scriptsize \begin{tikzcd}
 p_{02} \arrow[r] & p_{01} \arrow[r]& p_{00}\arrow[d]\\
  &  & p_{10}.
\end{tikzcd}
}\]

We conclude that $M_{\sigma}$ is the functor $[1]\times [2]^{\op}\to\cP$ given by
\[{
\scriptsize \begin{tikzcd}
 p_{02} \arrow{d} \arrow{r}&p_{01}\arrow{r}\arrow{d}&p_{00}\arrow{d} \\
  p_{1 2} \arrow{r} & p_{11}\arrow{r}  & p_{10}.
\end{tikzcd}
}\]
\end{ex}

\bibliographystyle{amsalpha}
\bibliography{ref}

\end{document}